\providecommand{\tabularnewline}{\\}
\numberwithin{equation}{section}
\numberwithin{figure}{section}
\providecommand{\U}[1]{\protect\rule{.1in}{.1in}}
\newtheorem{theorem}{Theorem}[section]
\newtheorem{corollary}[theorem]{Corollary}
\theoremstyle{definition}
\newtheorem{definition}[theorem]{Definition}
\theoremstyle{remark}
\newtheorem{example}[theorem]{Example}
\newtheorem{remark}[theorem]{Remark}
\numberwithin{equation}{section}
\DeclareMathOperator{\sech}{sech}
\title[SPPS for polynomial pencils]{Spectral parameter power series for polynomial pencils of Sturm-Liouville
operators and Zakharov-Shabat systems}
\author[V. V. Kravchenko]{Vladislav V. Kravchenko}
\author[S. M. Torba]{Sergii M. Torba}
\author[U. Velasco-García]{Ulises Velasco-García}
\address[V. V. Kravchenko, S. M. Torba and U. Velasco-García]{Departamento
de Matemáticas, CINVESTAV del IPN, Unidad Querétaro, Libramiento Norponiente
2000, Fracc. Real de Juriquilla, Querétaro, Qro. C.P. 76230 MEXICO.}
\email[V. V. Kravchenko]{vkravchenko@math.cinvestav.edu.mx}
\email[S. M. Torba]{storba@math.cinvestav.edu.mx}
\email[U. Velasco-García]{ulisesv@math.cinvestav.edu.mx}
\thanks{Research of the authors was partially supported by
CONACYT, Mexico via the project 166141.}
\subjclass[2010]{Primary 34A45, 34B07, 34L16, 41A58, 65L10, 65L15; Secondary 34B09, 34L40, 35Q41, 35Q55, 35Q74, 81Q05}
\keywords{Sturm-Liouville problems, SPPS representation, energy dependent potential, polynomial pencil of Sturm-Liouville operators, Zakharov-Shabat system, dispersion equation, numerical method}
\begin{document}
\begin{abstract}
A spectral parameter power series (SPPS) representation for solutions
of Sturm-Liouville equations of the form
\[
\left(pu'\right)'+qu=u\sum_{k=1}^{N}\lambda^{k}r_{k}\eqno(\ast)
\]
is obtained. It allows one to write a general solution of the equation
as a power series in terms of the spectral parameter $\lambda$. The
coefficients of the series are given in terms of recursive integrals
involving a particular solution of the equation $\left(pu_{0}'\right)'+qu_{0}=0$.
The convenient form of the solution of ($\ast$) provides an efficient
numerical method for solving corresponding initial value, boundary
value and spectral problems.

A special case of the Sturm-Liouville equation ($\ast$) arises in
relation with the Zakharov-Shabat system. We derive an SPPS representation
for its general solution and consider other applications as the one-dimensional
Dirac system and the equation describing a damped string. Several
numerical examples illustrate the efficiency and the accuracy of the
numerical method based on the SPPS representations which besides its
natural advantages like the simplicity in implementation and accuracy
is applicable to the problems admitting complex coefficients, spectral
parameter dependent boundary conditions and complex spectrum.
\end{abstract}

\maketitle

\section{Introduction}

Equations of the form
\begin{equation}
\left(pu^{\prime}\right)^{\prime}+qu=u\sum_{k=1}^{N}\lambda^{k}r_{k}\label{SLpencil}
\end{equation}
arise in numerous applications and have been studied in a considerable
number of publications where sometimes they were referred to as Schrödinger-type
equations with a polynomial energy-dependent potential \cite{Scrodinger-type equation2,Scrodinger-type equation1},
Schrödinger equations with an energy-dependent potential \cite{Lin Li Qian},
Schrödinger operators with energy depending potentials \cite{Laptev Shterenberg Sukhanov},
Klein-Gordon s-wave equations \cite{Baramov}, energy-dependent Sturm--{}Liouville
equations \cite{Energy dependent S-L}, pencil of Sturm-Liouville
operators \cite{Pencil of Sturm-Liouville,Pencil of SL}, polynomial
pencil of Sturm-Liouville equations \cite{pecil of S-L} or bundle
for Sturm-Liouville operators \cite{Bundle of Sturm-Liouville}. Here
$p$, $q$ and $r_{k}$, $k=1,\ldots,N$ are complex-valued functions
of the real variable $x$ and the complex number $\lambda$ is the
spectral parameter. Additional conditions on the coefficients of equation
(\ref{SLpencil}) will be formulated below.

In the present work we obtain a spectral parameter power series (SPPS)
representation for solutions of (\ref{SLpencil}). This result generalizes
what was done for the Sturm-Liouville equation
\begin{equation}
\left(pu^{\prime}\right)^{\prime}+qu=\lambda ru\label{eq: Sturm-Liouville}
\end{equation}
in \cite{KrCV08} (see also \cite{Libro Kravchenko,SPPS}).

The general solution of (\ref{SLpencil}) is represented in the form
of a power series in terms of the spectral parameter $\lambda$. The
coefficients of the series are calculated as recursive integrals in
terms of a particular solution of the equation
\begin{equation}
\left(pu_{0}^{\prime}\right)^{\prime}+qu_{0}=0.\label{SL0}
\end{equation}
Moreover, to construct the particular solution one can use the same
SPPS approach as was explained in \cite{Libro Kravchenko,SPPS}.

As was shown in a number of recent publications the SPPS representation
provides an efficient and accurate method for solving initial value,
boundary value and spectral problems (see \cite{CKKO2009}, \cite{CKOR},
\cite{ErbeMertPeterson2012}, \cite{KKB2013}, \cite{KiraRosu2010},
\cite{KKR2012}, \cite{Khmelnytskaya Serroukh 2013 MMAS}, \cite{Khmelnytskaya Torchinska 2010},
\cite{KrCV08}, \cite{Libro Kravchenko}, \cite{SPPS}, \cite{Kravchenko-Velasco},
\cite{Rabinovich et al 2013 MMAS}). In this paper we demonstrate
this fact in application to equation (\ref{SLpencil}). The main advantage
of the SPPS approach applied to spectral problems consists in the
possibility to write down a characteristic (or dispersion) equation
of a problem in an explicit form $\Phi(\lambda)=0$ where the function
$\Phi$ is analytic with respect to $\lambda$. Due to the SPPS representation
the function $\Phi$ is given in the form of a Taylor series whose
coefficients can be computed in terms of the particular solution of
(\ref{SL0}). For a numerical computation a partial sum of the Taylor
series $\Phi_{M}$ is calculated and the approximate solution of the
spectral problem reduces to the problem of finding zeros of the polynomial
$\Phi_{M}(\lambda)$. The accuracy of this approach depends on the
smallness of the difference $\left\vert \Phi-\Phi_{M}\right\vert $
for which we present estimates and additionally show that due to Rouche's
theorem a good approximation of $\Phi$ by $\Phi_{M}$ in a domain
$\Omega\subset\mathbb{C}$ guarantees that no root of $\Phi_{M}$
which would correspond to a fictitious eigenvalue can appear in $\Omega$
as a result of truncation of the spectral parameter power series.
That is, in a domain $\Omega$ where $\max\left\vert \Phi-\Phi_{M}\right\vert $
is sufficiently small all roots of $\Phi_{M}$ approximate true eigenvalues
of the spectral problem. We show that for localizing the eigenvalues
and for estimating their number in a complex domain the argument principle
can be used.

The Zakharov-Shabat system is one of the physical models which can
be reduced to an equation of the form (\ref{SLpencil}). When the
potential in the Zakharov-Shabat system is real valued the situation
is simpler in the sense that the system reduces to an equation (\ref{eq: Sturm-Liouville})
and in that case the SPPS representations were obtained and used for
solving spectral problems in \cite{Kravchenko-Velasco}. Nevertheless
when the potential is complex valued such reduction in general is impossible
and an equation of the form (\ref{SLpencil}) with $N=2$ naturally
arises. We derive an SPPS representation for solutions of such general
Zakharov-Shabat system as well as an analytic form for the dispersion
equation of the corresponding spectral problem in the case of a compactly
supported potential. The numerical implementation of the method together
with the argument principle is illustrated on a couple of test problems.
We show that the one-dimensional Dirac system can be studied in a
similar way.

Another physical model considered in this paper and reducing to an
equation of the form (\ref{SLpencil}) again with $N=2$ corresponds
to a damped string \cite{Atkinson,Jaulent,Kobyakova}. We obtain an
analytic form for the dispersion equation of the corresponding spectral
problem and illustrate the application of the SPPS method on a number
of numerical examples.

\section{SPPS representation for solutions of (\ref{SLpencil})}

In this section we derive and prove an SPPS representation for the
general solution of (\ref{SLpencil}).

\begin{theorem} \label{thm:SL grado N} Suppose that on a finite
segment $\left[a,b\right]$ the equation
\begin{equation*}
\left(pv^{\prime}\right)^{\prime}+qv=0\label{eq: S-L lamda0 N}
\end{equation*}
possesses a particular non-vanishing solution $u_{0}$ such that the
functions $u_{0}^{2}r_{k}$, $k=1,\ldots,N$ and $\frac{1}{u_{0}^{2}p}$
are continuous on $\left[a,b\right]$. Then a general solution of
the equation
\begin{equation}
\left(pu^{\prime}\right)^{\prime}+qu=u\sum_{k=1}^{N}\lambda^{k}r_{k}\label{eq: S-L Bundle}
\end{equation}
has the form $u=c_{1}u_{1}+c_{2}u_{2}$, where $c_{1}$ and $c_{2}$
are arbitrary complex constants,
\begin{equation}
u_{1}=u_{0}\sum_{n=0}^{\infty}\lambda^{n}\widetilde{X}^{\left(2n\right)}\quad\text{ and }\quad u_{2}=u_{0}\sum_{n=0}^{\infty}\lambda^{n}X^{\left(2n+1\right)}\label{eq:Sumas-N}
\end{equation}
with $\widetilde{X}^{\left(n\right)}$ and $\ X^{\left(n\right)}$
being defined by the recursive relations
\begin{align*}
\widetilde{X}^{\left(n\right)} & \equiv X^{\left(n\right)}\equiv0\quad\text{ for }n<0,\\
\widetilde{X}^{\left(0\right)} & \equiv X^{\left(0\right)}\equiv1,
\end{align*}
and
\begin{align}
\widetilde{X}^{\left(n\right)}\left(x\right) & =\begin{cases}
\intop_{x_{0}}^{x}u_{0}^{2}\left(y\right)\sum_{k=1}^{N}\widetilde{X}^{\left(n-2k+1\right)}\left(y\right)r_{k}\left(y\right)dy & n\text{-odd}\\
\intop_{x_{0}}^{x}\widetilde{X}^{\left(n-1\right)}\left(y\right)\frac{1}{u_{0}^{2}\left(y\right)p\left(y\right)}dy & n\text{-even,}
\end{cases}\label{eq: equis tilde}\\
X^{\left(n\right)}\left(x\right) & =\begin{cases}
\intop_{x_{0}}^{x}X^{\left(n-1\right)}\left(y\right)\frac{1}{u_{0}^{2}\left(y\right)p\left(y\right)}dy & n\text{-odd}\\
\intop_{x_{0}}^{x}u_{0}^{2}\left(y\right)\sum_{k=1}^{N}X^{\left(n-2k+1\right)}\left(y\right)r_{k}\left(y\right)dy & n\text{-even,}
\end{cases}\label{eq: equis sin tilde}
\end{align}
where $x_{0}$ is an arbitrary point of $\left[a,b\right]$ such that
$p(x_{0})\neq0$. Both series in \eqref{eq:Sumas-N} converge uniformly
on $\left[a,b\right]$. \end{theorem}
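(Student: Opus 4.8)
The plan is to reduce \eqref{eq: S-L Bundle} by a P\'olya-type factorization based on $u_{0}$ to an iterated-integral problem, verify the two series in \eqref{eq:Sumas-N} term by term using the recursions \eqref{eq: equis tilde}--\eqref{eq: equis sin tilde}, establish their uniform convergence, and close with a Wronskian argument. First I would substitute $u=u_{0}v$ and use the identity
\[
\left(pu'\right)'+qu=\frac{1}{u_{0}}\left(pu_{0}^{2}\left(\frac{u}{u_{0}}\right)'\right)',
\]
valid because $u_{0}$ solves the equation with $\lambda=0$; this transforms \eqref{eq: S-L Bundle} into $\left(pu_{0}^{2}v'\right)'=u_{0}^{2}\bigl(\sum_{k=1}^{N}\lambda^{k}r_{k}\bigr)v$. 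Writing $v_{1}=\sum_{n\ge 0}\lambda^{n}\widetilde{X}^{(2n)}$ and differentiating formally, the $n$-even case of \eqref{eq: equis tilde} gives $pu_{0}^{2}v_{1}'=\sum_{n\ge 1}\lambda^{n}\widetilde{X}^{(2n-1)}$, and then the $n$-odd case gives $\left(pu_{0}^{2}v_{1}'\right)'=u_{0}^{2}\sum_{n\ge 1}\lambda^{n}\sum_{k=1}^{N}\widetilde{X}^{(2(n-k))}r_{k}$; the change of summation index $i=n-k$ together with $\widetilde{X}^{(m)}\equiv 0$ for $m<0$ collapses this double sum into $u_{0}^{2}\bigl(\sum_{k=1}^{N}\lambda^{k}r_{k}\bigr)v_{1}$, so $v_{1}$, hence $u_{1}=u_{0}v_{1}$, solves the equation; the same bookkeeping applied to $v_{2}=\sum_{n\ge 0}\lambda^{n}X^{(2n+1)}$ via \eqref{eq: equis sin tilde} shows $u_{2}=u_{0}v_{2}$ is a solution as well. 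Evaluating at $x_{0}$ gives $u_{1}(x_{0})=u_{0}(x_{0})$, $(pu_{1}')(x_{0})=(pu_{0}')(x_{0})$, $u_{2}(x_{0})=0$, $(pu_{2}')(x_{0})=1/u_{0}(x_{0})$, so $u_{1}$ and $u_{2}$ are linearly independent; since $1/(u_{0}^{2}p)$ is continuous on $[a,b]$, both $p$ and $u_{0}$ are nonvanishing there, \eqref{eq: S-L Bundle} is a regular second-order linear ODE for each fixed $\lambda$, and its two-dimensional solution space is spanned by $u_{1},u_{2}$, so the general solution is $c_{1}u_{1}+c_{2}u_{2}$.

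The hard part is the uniform convergence, because the naive term-by-term estimate loses the factorial decay: each $n$-odd step in \eqref{eq: equis tilde} reaches back through a window of $N$ indices. Put $c_{1}=\max_{[a,b]}|1/(u_{0}^{2}p)|$, $c_{2}=\max_{1\le k\le N}\max_{[a,b]}|u_{0}^{2}r_{k}|$ and $\phi_{j}(x)=|\widetilde{X}^{(2j)}(x)|$; for $x\ge x_{0}$ the recursion yields, for $j\ge 1$,
\[
\phi_{j}(x)\le c_{1}c_{2}\int_{x_{0}}^{x}\int_{x_{0}}^{s}\sum_{k=1}^{N}\phi_{j-k}(y)\,dy\,ds,
\]
and symmetrically for $x\le x_{0}$. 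Fix $\mu\ge 0$, let $P(\mu)=c_{1}c_{2}\sum_{k=1}^{N}\mu^{k}$, multiply by $\mu^{j}$, sum over $j=1,\dots,m$, and shift the inner summation index (so that $\sum_{j\ge 1}\mu^{j}\phi_{j-k}$ becomes $\mu^{k}\sum_{i\ge 0}\mu^{i}\phi_{i}$): the partial sum $\Phi_{m}(x)=\sum_{j=0}^{m}\phi_{j}(x)\mu^{j}$ then satisfies the scalar inequality $\Phi_{m}(x)\le 1+P(\mu)\int_{x_{0}}^{x}\int_{x_{0}}^{s}\Phi_{m}(y)\,dy\,ds$. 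Iterating this monotone inequality — the remainder after $l$ iterations is bounded by $P(\mu)^{l}\,\|\Phi_{m}\|_{\infty}\,(x-x_{0})^{2l}/(2l)!\to 0$ — gives $\Phi_{m}(x)\le\cosh\!\bigl(\sqrt{P(\mu)}\,(x-x_{0})\bigr)$ for every $m$, so letting $m\to\infty$,
\[
\sum_{n=0}^{\infty}\bigl|\widetilde{X}^{(2n)}(x)\bigr|\,|\lambda|^{n}\le\cosh\!\bigl(\sqrt{P(|\lambda|)}\,(b-a)\bigr),
\]
a bound finite for every $\lambda$ and uniform for $x\in[a,b]$ and for $|\lambda|$ on compact sets. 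The same argument, with $|X^{(1)}(x)|\le c_{1}(b-a)$ serving as the base term, controls $\sum_{n}|X^{(2n+1)}(x)|\,|\lambda|^{n}$. Hence both series in \eqref{eq:Sumas-N} converge uniformly on $[a,b]$, and $u_{1},u_{2}$ are entire in $\lambda$.

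Finally I would justify the formal differentiations: the series $\sum_{n}\lambda^{n}\widetilde{X}^{(2n-1)}$ and $u_{0}^{2}\sum_{n}\lambda^{n}\sum_{k}\widetilde{X}^{(2(n-k))}r_{k}$ obey the same generating-function bound up to the harmless extra factor $\sum_{k}|\lambda|^{k}$, hence converge uniformly on $[a,b]$; since $1/(u_{0}^{2}p)$ and $u_{0}^{2}r_{k}$ are continuous, term-by-term differentiation is legitimate and produces $pu_{0}^{2}v_{1}'$ and $\left(pu_{0}^{2}v_{1}'\right)'$ exactly as used above, and likewise for $v_{2}$. Undoing the factorization then shows that $u_{1}$ and $u_{2}$ satisfy $\left(pu'\right)'+qu=u\sum_{k}\lambda^{k}r_{k}$ in the classical sense, just as in the Sturm--Liouville case of \cite{KrCV08}. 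The only real obstacle is thus the convergence estimate, and the device that makes it work is to sum the spectral parameter power series first, turning the $N$-term windowed recursion into the single iterated-integral inequality above.
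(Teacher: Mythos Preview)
Your proof is correct and follows the paper's outline for the formal verification (P\'olya factorization, term-by-term check via the recursions, Wronskian at $x_{0}$). Where you genuinely diverge is in the uniform-convergence estimate. The paper proves by induction an explicit pointwise bound
\[
\bigl|\widetilde{X}^{(2n)}(x)\bigr|\le\sum_{k=0}^{n-[n/N]}\binom{n}{k}\frac{(m|x-x_{0}|)^{2(n-k)}}{(2(n-k))!},
\]
which requires a somewhat delicate rearrangement and the combinatorial identity $\binom{n+1}{l}=\sum_{j=0}^{l}\binom{n-j}{l-j}$; summation in $n$ then yields a bound of the form $\sum_{k=0}^{N-1}M^{k}\cosh M^{N/2}$. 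You instead package the whole family into the generating function $\Phi_{m}(x)=\sum_{j\le m}\mu^{j}|\widetilde{X}^{(2j)}(x)|$, observe that the $N$-term windowed recursion collapses to a single scalar Volterra inequality $\Phi_{m}\le 1+P(\mu)\iint\Phi_{m}$, and iterate (Gronwall) to obtain $\Phi_{m}\le\cosh\bigl(\sqrt{P(|\lambda|)}\,(b-a)\bigr)$ directly. Your route is shorter and avoids the combinatorics entirely, and the resulting bound is tidier. The paper's route, on the other hand, yields $\lambda$-free bounds on each individual coefficient $\widetilde{X}^{(2n)}$; these are what the authors later invoke when estimating the truncation error $|\Phi-\Phi_{M}|$ in the Rouch\'e-theorem discussion of the characteristic function, so their extra work is not wasted. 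Both approaches give entire dependence on $\lambda$ and justify the term-by-term differentiation in the same way.
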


\begin{proof} We prove first that $u_{1}$ and $u_{2}$ are indeed
solutions of (\ref{eq: S-L Bundle}) whenever the application of the
operator $L=\frac{d}{dx}p\frac{d}{dx}+q$ makes sense. If $u_{0}$
is a non-vanishing solution of $Lu_{0}=0$, then the operator $L=\frac{d}{dx}p\frac{d}{dx}+q$
admits the Pólya factorization $L=\frac{1}{u_{0}}\partial pu_{0}^{2}\partial\frac{1}{u_{0}}$.
Application of $L$ to $u_{1}$ using (\ref{eq: equis tilde}) gives
\begin{align*}
Lu_{1} & =\frac{1}{u_{0}}\partial pu_{0}^{2}\partial\sum_{k=0}^{\infty}\lambda^{k}\widetilde{X}^{\left(2k\right)}=\frac{1}{u_{0}}\sum_{k=1}^{\infty}\lambda^{k}\partial\widetilde{X}^{\left(2k-1\right)}\\
 & =\frac{1}{u_{0}}\sum_{k=1}^{\infty}\lambda^{k}\sum_{n=1}^{N}\widetilde{X}^{\left(2\left(k-n\right)\right)}u_{0}^{2}r_{n}=\sum_{n=1}^{N}r_{n}u_{0}\sum_{k=1}^{\infty}\lambda^{k}\widetilde{X}^{\left(2\left(k-n\right)\right)}\\
 & =\sum_{n=1}^{N}r_{n}u_{0}\sum_{k=n}^{\infty}\lambda^{k}\widetilde{X}^{\left(2\left(k-n\right)\right)}=\sum_{n=1}^{N}r_{n}u_{0}\sum_{k=0}^{\infty}\lambda^{k+n}\widetilde{X}^{\left(2k\right)}\\
 & =\sum_{n=1}^{N}r_{n}\lambda^{n}u_{0}\sum_{k=0}^{\infty}\lambda^{k}\widetilde{X}^{\left(2k\right)}=u_{1}\sum_{n=1}^{N}\lambda^{n}r_{n}.
\end{align*}
Similarly, application of $L$ to $u_{2}$ shows that $u_2$ satisfies (\ref{eq: S-L Bundle}).

In order to give sense to this chain of equalities it is sufficient
to prove the uniform convergence of the series involved in $u_{1}$
and $u_{2}$ as well as the uniform convergence of the series obtained
by a term-wise differentiation of $\frac{u_{1}}{u_{0}}$ and $\frac{u_{2}}{u_{0}}$.
First, we prove the uniform convergence of the series involved in
$u_{1}$. This can be done with the aid of the Weierstrass M-test.

We prove by induction that for $n\ge0$ the inequality
\begin{equation}
\left\vert \widetilde{X}^{\left(2n\right)}(x)\right\vert \leq\sum_{k=0}^{n-\left[\frac{n}{N}\right]}\binom{n}{k}\frac{\left(m\left\vert x-x_{0}\right\vert \right)^{2\left(n-k\right)}}{\left(2\left(n-k\right)\right)!}\label{eq: Mayorante para X tilde}
\end{equation}
is valid, where $m$ denotes the maximum of the functions $\left\vert u_{0}^{2}(x)r_{k}(x)\right\vert $, $k=1,\ldots,N$
and $\bigl\vert \frac{1}{u_{0}^{2}\left(x\right)p\left(x\right)}\bigr\vert $
on $[a,b]$. For $n=0$, $\vert \widetilde{X}^{\left(0\right)}\vert =1$,
and hence (\ref{eq: Mayorante para X tilde}) holds. For the inductive
step we assume that (\ref{eq: Mayorante para X tilde}) holds for
some $n$ and prove that for $n+1$ the inequality
\begin{equation}\label{eq: estimate}
\left\vert \widetilde{X}^{\left(2\left(n+1\right)\right)}(x)\right\vert \leq\sum_{k=0}^{n+1-\left[\frac{n+1}{N}\right]}\binom{n+1}{k}\frac{\left(m\left\vert x-x_{0}\right\vert \right)^{2\left(n+1-k\right)}}{\left(2\left(n+1-k\right)\right)!}
\end{equation}
holds. Suppose that $x_{0}\leq x$ (the opposite case is similar),
recalling the definition of $m$ and (\ref{eq: equis tilde}) we have
\begin{align*}
\left\vert \widetilde{X}^{\left(2\left(n+1\right)\right)}(x)\right\vert  & =\left\vert \intop_{x_{0}}^{x}\widetilde{X}^{\left(2n+1\right)}\left(y\right)\frac{1}{u_{0}^{2}\left(y\right)p\left(y\right)}dy\right\vert \leq m\intop_{x_{0}}^{x}\left\vert \widetilde{X}^{\left(2n+1\right)}\left(y\right)\right\vert dy\\
 & \le m\intop_{x_{0}}^{x}\intop_{x_{0}}^{y}\sum_{j=1}^{N}\left\vert \widetilde{X}^{\left(2\left(n-j+1\right)\right)}\left(z\right)u_{0}^{2}\left(z\right)r_{j}\left(z\right)\right\vert dzdy\\
 & \leq m^{2}\intop_{x_{0}}^{x}\intop_{x_{0}}^{y}\sum_{j=1}^{N}\left\vert \widetilde{X}^{\left(2\left(n-j+1\right)\right)}\left(z\right)\right\vert dzdy=m^{2}\intop_{x_{0}}^{x}\intop_{x_{0}}^{y}\sum_{j=1}^{\min\{n+1,N\}}\left\vert \widetilde{X}^{\left(2\left(n-j+1\right)\right)}\left(z\right)\right\vert dzdy.
\end{align*}
Applying (\ref{eq: Mayorante para X tilde}) one obtains
\begin{align*}
\left\vert \widetilde{X}^{\left(2\left(n+1\right)\right)}(x)\right\vert  & \leq m^{2}\intop_{x_{0}}^{x}\intop_{x_{0}}^{y}\sum_{j=1}^{\min\{n+1,N\}}\sum_{k=0}^{n-j+1-\left[\frac{n-j+1}{N}\right]}\binom{n-j+1}{k}\frac{\left(m\left\vert z-x_{0}\right\vert \right)^{2\left(n-j-k+1\right)}}{\left(2\left(n-j-k+1\right)\right)!}dzdy\\
 & =\sum_{j=1}^{\min\{n+1,N\}}\sum_{k=0}^{n-j+1-\left[\frac{n-j+1}{N}\right]}\frac{\binom{n-j+1}{k}m^{2\left(n-j-k+2\right)}}{\left(2\left(n-j-k+1\right)\right)!}\intop_{x_{0}}^{x}\intop_{x_{0}}^{y}\left\vert z-x_{0}\right\vert ^{2\left(n-j-k+1\right)}dzdy\\
 & =\sum_{j=1}^{\min\{n+1,N\}}\sum_{k=0}^{n-j+1-\left[\frac{n-j+1}{N}\right]}\binom{n-j+1}{k}\frac{\left(m\left\vert x-x_{0}\right\vert \right)^{2\left(n-j-k+2\right)}}{\left(2\left(n-j-k+2\right)\right)!}.
\end{align*}
It is easy to see that from $1\leq j\leq\min\{n+1,N\}$ and $0\leq k\leq n-j+1-\left[\frac{n-j+1}{N}\right]$
it follows that
\[
0\leq k+j-1\leq n+1-\left[\frac{n+1}{N}\right].
\]
We rearrange the terms with respect to $l=k+j-1$,
\begin{align*}
 & \sum_{j=1}^{\min\{n+1,N\}}\sum_{k=0}^{n-j+1-\left[\frac{n-j+1}{N}\right]}\binom{n-j+1}{k}\frac{\left(m\left\vert x-x_{0}\right\vert \right)^{2\left(n-j-k+2\right)}}{\left(2\left(n-j-k+2\right)\right)!}\\
 & \qquad=\sum_{l=0}^{n+1-\left[\frac{n+1}{N}\right]}\sum_{j=1}^{\min\{N,n+1,l+1\}}\binom{n-(j-1)}{l-(j-1)}\frac{\left(m\left\vert x-x_{0}\right\vert \right)^{2\left(n+1-l\right)}}{\left(2\left(n+1-l\right)\right)!}\\
 & \qquad\leq\sum_{l=0}^{n+1-\left[\frac{n+1}{N}\right]}\frac{\left(m\left\vert x-x_{0}\right\vert \right)^{2\left(n+1-l\right)}}{\left(2\left(n+1-l\right)\right)!}\sum_{j=0}^{l}\binom{n-j}{l-j}.
\end{align*}
Using the well known combinatorial relation
\[
\binom{n+1}{l}=\sum_{j=0}^{l}\binom{n-j}{l-j}\qquad\text{ for }l<n+1
\]
we obtain
\[
\sum_{l=0}^{n+1-\left[\frac{n+1}{N}\right]}\frac{\left(m\left\vert x-x_{0}\right\vert \right)^{2\left(n+1-l\right)}}{\left(2\left(n+1-l\right)\right)!}\sum_{j=0}^{l}\binom{n-j}{l-j}=\sum_{l=0}^{n+1-\left[\frac{n+1}{N}\right]}\frac{\left(m\left\vert x-x_{0}\right\vert \right)^{2\left(n+1-l\right)}}{\left(2\left(n+1-l\right)\right)!}\binom{n+1}{l},
\]
which finishes the proof of \eqref{eq: estimate} and hence of the estimate \eqref{eq: Mayorante para X tilde} for any integer $n\ge 0$.

Next we prove that the series $\sum_{n=0}^{\infty}\lambda^{n}\widetilde{X}^{\left(2n\right)}$
converges uniformly on $\left[a,b\right]$. We have for $n\geq0$
\begin{align*}
\left\vert \widetilde{X}^{\left(2n\right)}(x)\right\vert  & \leq\sum_{k=0}^{n-\left[\frac{n}{N}\right]}\binom{n}{k}\frac{\left(m\left\vert x-x_{0}\right\vert \right)^{2\left(n-k\right)}}{\left(2\left(n-k\right)\right)!}\leq\sum_{k=0}^{n-\left[\frac{n}{N}\right]}\binom{n}{k}\frac{\left(m\left(b-a\right)\right)^{2\left(n-k\right)}}{\left(2\left[\frac{n}{N}\right]\right)!}\\
 & \leq\frac{1}{\left(2\left[\frac{n}{N}\right]\right)!}\sum_{k=0}^{n}\binom{n}{k}\left(m\left(b-a\right)\right)^{2\left(n-k\right)}1^{k}=\frac{\left(\left(m\left(b-a\right)\right)^{2}+1\right)^{n}}{\left(2\left[\frac{n}{N}\right]\right)!}.
\end{align*}
Hence
\begin{equation*}
\sum_{n=0}^{\infty}\left\vert \lambda\right\vert ^{n}\left\vert \widetilde{X}^{\left(2n\right)}\right\vert  \leq\sum_{n=0}^{\infty}\left\vert \lambda\right\vert ^{n}\frac{\left(\left(m\left(b-a\right)\right)^{2}+1\right)^{n}}{\left(2\left[\frac{n}{N}\right]\right)!},
\end{equation*}
and grouping terms with respect to $n_1=[\frac nN]$ we obtain
\begin{equation*}
\sum_{n=0}^{\infty}\left\vert \lambda\right\vert ^{n}\left\vert \widetilde{X}^{\left(2n\right)}\right\vert \le \sum_{n_1=0}^{\infty}\left\vert \lambda\right\vert ^{n_1N}\frac{\left(\left(m\left(b-a\right)\right)^{2}+1\right)^{n_1N}}{\left(2n_1\right)!}\sum_{k=0}^{N-1}\left\vert \lambda\right\vert ^{k}\left(\left(m\left(b-a\right)\right)^{2}+1\right)^{k}.
\end{equation*}
Considering the notation $M=\left\vert \lambda\right\vert \left(\left(m\left(b-a\right)\right)^{2}+1\right)$
one obtains
\[
\sum_{n=0}^{\infty}\left\vert \lambda\right\vert ^{n}\left\vert \widetilde{X}^{\left(2n\right)}\right\vert \leq\sum_{k=0}^{N-1}M^{k}\sum_{n_1=0}^{\infty}\frac{M^{n_1N}}{(2n_1)!}=\sum_{k=0}^{N-1}M^{k}\cosh M^{N/2}.
\]
Then, by the Weierstrass M-test the series $\sum_{n=0}^{\infty}\lambda^{n}\widetilde{X}^{\left(2n\right)}$
converges uniformly on $\left[a,b\right]$.

The proof for the function $u_{2}$ and the derivatives $\left(\frac{u_{1}}{u_{0}}\right)^{\prime}=\frac{1}{u_{0}p}\sum_{n=0}^{\infty}\lambda^{n+1}\widetilde{X}^{(2n+1)}$
and $\left(\frac{u_{2}}{u_{0}}\right)^{\prime}=\frac{1}{u_{0}p}\sum_{n=0}^{\infty}\lambda^{n}X^{(2n)}$
is similar. The last step is to verify that the Wronskian of $u_{1}$
and $u_{2}$ is different from zero at least at one point (which necessarily
implies the linear independence of $u_{1}$ and $u_{2}$ on the whole
segment $[a,b]$). It is easy to see that by definition all $\widetilde{X}^{(n)}(x_{0})$
and $X^{(n)}(x_{0})$ vanish except $\widetilde{X}^{(0)}(x_{0})=X^{(0)}(x_{0})=1$.
Thus
\begin{align}
u_{1}\left(x_{0}\right) & =u_{0}\left(x_{0}\right), & u_{1}^{\prime}\left(x_{0}\right) & =u_{0}^{\prime}\left(x_{0}\right),\label{eq:sols en x0 1}\\
u_{2}\left(x_{0}\right) & =0, & u_{2}^{\prime}\left(x_{0}\right) & =\frac{1}{u_{0}\left(x_{0}\right)p\left(x_{0}\right)},\label{eq:sols en x0 2}
\end{align}
and hence the Wronskian of $u_{1}$ and $u_{2}$ at $x_{0}$ is $\frac{1}{p\left(x_{0}\right)}\neq0$.
\end{proof}

\begin{remark} The particular solution $u_{0}$ which in general
can be complex valued may be constructed by means of the SPPS
representation as well (see \cite{Libro Kravchenko,SPPS}). \end{remark}

When $N=1$, the result of Theorem \ref{thm:SL grado N} reduces to
the SPPS representation for solutions of a classic Sturm-Liouville
equation presented in \cite{SPPS}.

\begin{example} Consider the differential equation
\[
y^{\prime\prime}=y(\lambda+2\lambda^{2})
\]
on the segment $\left[0,1\right]$ with the initial conditions
\begin{equation}
y\left(0\right)=1\quad\text{and}\quad y^{\prime}\left(0\right)=0.\label{example initial cond}
\end{equation}
The unique solution of this problem has the form $y(x)=\cosh\left(x\sqrt{\lambda+2\lambda^{2}}\right)$.
This is an equation of the form (\ref{eq: S-L Bundle}) with $p=1$,
$q=0$ and $r_{k}=k$, $k=1,2$. Let us consider its solution in terms
of the SPPS representation from Theorem \ref{thm:SL grado N}. Taking
$x_{0}=0$ and $u_{0}\equiv1$ as a particular solution of the differential
equation $y^{\prime\prime}=0$ we compute the functions $\widetilde{X}^{\left(2n\right)}$
for $n=1,2,3$ to find the first terms of the series $u_{1}=u_{0}\sum_{k=0}^{\infty}\lambda^{k}\widetilde{X}^{\left(2k\right)}$,
\[
\begin{array}{cc}
n & \widetilde{X}^{\left(2n\right)}\\
1 & \frac{x^{2}}{2}\\
2 & x^{2}+\frac{x^{4}}{24}\\
3 & \frac{x^{4}}{6}+\frac{x^{6}}{720}\\
\vdots & \vdots
\end{array}
\]
Thus, the first four terms of the SPPS representation of $u_{1}$
have the form
\begin{equation}
u_{1}(x)=1+\frac{x^{2}}{2}\lambda+\left(x^{2}+\frac{x^{4}}{24}\right)\lambda^{2}+\left(\frac{x^{4}}{6}+\frac{x^{6}}{720}\right)\lambda^{3}+\cdots.\label{example u1}
\end{equation}
Notice that due to (\ref{eq:sols en x0 1}) the solution $u_{1}$
satisfies (\ref{example initial cond}) and hence must coincide with
$y$. Indeed, computation of the first four terms of the Taylor series
of the exact solution $y$ with respect to the spectral parameter
$\lambda$ gives us again (\ref{example u1}). \end{example}

\section{\label{Spectral shift} Spectral shift technique}

The procedure for constructing solutions of equation (\ref{eq: S-L Bundle})
described in Theorem \ref{thm:SL grado N} works when a particular
solution is available for $\lambda=0$ (analytically or numerically).
In \cite{SPPS} it was mentioned that for equation (\ref{eq: Sturm-Liouville})
it is also possible to construct the SPPS representation of a general
solution starting from a non-vanishing particular solution for some
$\lambda=\lambda_{0}$. Such procedure is called spectral shift and
has already proven its usefulness for numerical applications \cite{CKT Bessel Perturbed,KKB2013,SPPS}.

We show that a spectral shift technique may also be applied to equation
(\ref{eq: S-L Bundle}). Let $\lambda_{0}$ be a fixed complex number
(not necessarily an eigenvalue). Suppose that on the finite interval
$\left[a,b\right]$ a non-vanishing solution $u_{0}$ of the equation
\begin{equation}
(pu_{0}^{\prime})^{\prime}+qu_{0}=u_{0}\sum_{k=1}^{N}\lambda_{0}^{k}r_{k}\label{eq: lambda 0 particular}
\end{equation}
is known, such that the functions $u_{0}^{2}r_{k}$, $k=1,\ldots,N$
and $\frac{1}{u_{0}^{2}p}$ are continuous on $\left[a,b\right]$.
Let $\lambda=\lambda_{0}+\Lambda$ then the right hand side of (\ref{eq: S-L Bundle})
can be written in the form
\begin{align*}
u\sum_{k=1}^{N}\left(\lambda_{0}+\Lambda\right)^{k}r_{k} & =u\sum_{k=1}^{N}r_{k}\sum_{\ell=0}^{k}\binom{k}{\ell}\lambda_{0}^{\ell}\Lambda^{k-\ell}\\
 & =u\sum_{k=1}^{N}r_{k}\lambda_{0}^{k}+u\sum_{k=1}^{N}\Lambda^{k}\sum_{\ell=0}^{N-k}\binom{k+\ell}{\ell}\lambda_{0}^{\ell}r_{k+\ell}.
\end{align*}
Due to this identity equation (\ref{eq: S-L Bundle}) is transformed
again into an equation of the form (\ref{eq: S-L Bundle}):
\begin{equation}
L_{0}u=u\sum_{k=1}^{N}\Lambda^{k}\sum_{\ell=0}^{N-k}\binom{k+\ell}{\ell}\lambda_{0}^{\ell}r_{k+\ell},\label{Poly S-L lambda0}
\end{equation}
where
\[
L_{0}u=(pu^{\prime})^{\prime}+u\left(q-\sum_{k=1}^{N}r_{k}\lambda_{0}^{k}\right).
\]
Now the procedure described in Theorem \ref{thm:SL grado N} can be
applied to equation (\ref{Poly S-L lambda0}). Considering the particular
solution $u_{0}$ of (\ref{eq: lambda 0 particular}) and the functions
\[
\tilde{r}_{k}=\sum_{\ell=0}^{N-k}\binom{k+\ell}{\ell}\lambda_{0}^{\ell}r_{k+\ell}
\]
one can construct the system of recursive integrals $X^{(n)}$ and
$\widetilde{X}^{(n)}$ by applying (\ref{eq: equis tilde}) and (\ref{eq: equis sin tilde})
to functions $u_{0}$ and $\tilde{r}_{k}$. Then the general solution
of equation (\ref{eq: S-L Bundle}) has the form $u=c_{1}u_{1}+c_{2}u_{2}$
where $c_{1}$ and $c_{2}$ are arbitrary complex constants and
\[
u_{1}=u_{0}\sum_{n=0}^{\infty}\left(\lambda-\lambda_{0}\right)^{n}\widetilde{X}^{\left(2n\right)}\quad \text{and}\quad u_{2}=u_{0}\sum_{n=0}^{\infty}\left(\lambda-\lambda_{0}\right)^{n}X^{\left(2n+1\right)}.
\]

\section{The generalized Zakharov-Shabat system}

The Zakharov-Shabat system arises in the Inverse Scattering Transform
method when integrating the non-linear Schrödinger equation, see,
e.g., \cite{Ablowitz y Segur,Desaix,Z-S Dirac,KiraRosu2010,Kravchenko-Velasco,Tsoy,Yang2010,Zakharov-Shabat}.
In the recent work \cite{Kravchenko-Velasco} an SPPS representation
was obtained for the solutions of the Zakharov-Shabat system with
a real-valued potential. In this section we consider the generalized
Zakharov-Shabat system, also sometimes called the one-dimensional
Dirac system \cite{KiraRosu2010}
\begin{align}
v_{1}^{\prime} & =\lambda v_{1}+Pv_{2}\label{eq:ZS1}\\
v_{2}^{\prime} & =-\lambda v_{2}-Qv_{1},\label{eq:ZS2}
\end{align}
where $v_{1}$ and $v_{2}$ are unknown complex valued functions of
the independent variable $x$, $\lambda\in\mathbb{C}$ is a constant,
$Q$ and $P$ are complex-valued functions such that $Q$ does not
vanish on the domain of interest.

From (\ref{eq:ZS2}) we have
\begin{equation}
v_{1}=-\frac{1}{Q}\left(v_{2}^{\prime}+\lambda v_{2}\right).\label{eq: v1 despejada}
\end{equation}
Substituting this expression into (\ref{eq:ZS1}) we obtain an equation of
the form (\ref{eq: S-L Bundle}):
\begin{equation}
\left(\frac{1}{Q}v_{2}^{\prime}\right)^{\prime}+Pv_{2}=\lambda\frac{Q^{\prime}}{Q^{2}}v_{2}+\lambda^{2}\frac{1}{Q}v_{2}.\label{eq:ZS-pencil}
\end{equation}

\subsection{\label{sub:Solution-for-the}SPPS for the generalized Zakharov-Shabat
system}

In this subsection, applying the SPPS method to equation (\ref{eq:ZS-pencil}) we obtain an SPPS representation for solutions of the system \eqref{eq:ZS1}, \eqref{eq:ZS2}.  The following statement is a direct application of Theorem \ref{thm:SL grado N}
to equation (\ref{eq:ZS-pencil}).

\begin{corollary} \label{thm:SPPS pencil} Suppose that on a finite
segment $\left[a,b\right]$ the equation
\begin{equation}
\left(\frac{1}{Q}v^{\prime}\right)^{\prime}+Pv=0.\label{eq:ZS-pencil-cero}
\end{equation}
possesses a non vanishing particular solution $v_{0}\in C^{1}[a,b]$ such that
$\frac{v_{0}^{\prime}}{Q}\in C^{1}[a,b]$ and the functions $v_{0}^{2}\frac{Q^{\prime}}{Q^{2}}$,
$\frac{v_{0}^{2}}{Q}$ and $\frac{Q}{v_{0}^{2}}$ are continuous on
$\left[a,b\right]$. Then the general solution of \eqref{eq:ZS-pencil}
has the form $v_{2}=c_{1}g_{1}+c_{2}g_{2}$, where $c_{1}$ and $c_{2}$
are arbitrary complex constants,
\begin{equation}
g_{1}=v_{0}\sum_{k=0}^{\infty}\lambda^{k}\widetilde{X}^{\left(2k\right)}\quad \text{and}\quad g_{2}=v_{0}\sum_{k=0}^{\infty}\lambda^{k}X^{\left(2k+1\right)}\label{eq:Sumas-g}
\end{equation}
with $\widetilde{X}^{\left(n\right)}$ and $X^{\left(n\right)}$ being
defined by the recursive relations
\begin{align*}
\widetilde{X}^{\left(n\right)} & \equiv X^{\left(n\right)}\equiv0\qquad\text{for }n<0,\\
\widetilde{X}^{\left(0\right)} & \equiv X^{\left(0\right)}\equiv1,
\end{align*}
and
\begin{align}
\widetilde{X}^{\left(n\right)}\left(x\right) & =\begin{cases}
\intop_{x_{0}}^{x}\left(\widetilde{X}^{\left(n-1\right)}\left(y\right)v_{0}^{2}\left(y\right)\frac{Q^{\prime}\left(y\right)}{Q^{2}\left(y\right)}+\widetilde{X}^{\left(n-3\right)}\left(y\right)\frac{v_{0}^{2}\left(y\right)}{Q\left(y\right)}\right)dy, & n\text{-odd}\\
\intop_{x_{0}}^{x}\widetilde{X}^{\left(n-1\right)}\left(y\right)\frac{Q\left(y\right)}{v_{0}^{2}\left(y\right)}dy, & n\text{-even}
\end{cases}\label{eq: X tilde ZS}\\
X^{\left(n\right)}\left(x\right) & =\begin{cases}
\intop_{x_{0}}^{x}X^{\left(n-1\right)}\left(y\right)\frac{Q\left(y\right)}{v_{0}^{2}\left(y\right)}dy, & n\text{-odd}\\
\intop_{x_{0}}^{x}\left(X^{\left(n-1\right)}\left(y\right)v_{0}^{2}\left(y\right)\frac{Q^{\prime}\left(y\right)}{Q^{2}\left(y\right)}+X^{\left(n-3\right)}\left(y\right)\frac{v_{0}^{2}\left(y\right)}{Q\left(y\right)}\right)dy, & n\text{-even}
\end{cases}\label{eq: X ZS}
\end{align}
where $x_{0}$ is an arbitrary point of $\left[a,b\right]$. Furthermore,
both series in \eqref{eq:Sumas-g} converge uniformly on $\left[a,b\right]$.
\end{corollary}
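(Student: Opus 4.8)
The plan is to recognize that Corollary \ref{thm:SPPS pencil} is \emph{not} a literal instance of Theorem \ref{thm:SL grado N} as stated, because equation (\ref{eq:ZS-pencil}) has the coefficient $r_1 = Q'/Q^2$ attached to $\lambda$ and $r_2 = 1/Q$ attached to $\lambda^2$, with $p = 1/Q$ playing the role of the leading coefficient. So the first step is simply to match symbols: set $p \rightsquigarrow 1/Q$, $q \rightsquigarrow P$, $N = 2$, $r_1 \rightsquigarrow Q'/Q^2$, $r_2 \rightsquigarrow 1/Q$. Under this dictionary the quantity $1/(u_0^2 p)$ of Theorem \ref{thm:SL grado N} becomes $Q/v_0^2$, the quantity $u_0^2 r_1$ becomes $v_0^2 Q'/Q^2$, and $u_0^2 r_2$ becomes $v_0^2/Q$ — precisely the three functions whose continuity is hypothesized. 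I would then observe that the recursive relations (\ref{eq: equis tilde})--(\ref{eq: equis sin tilde}) with $N=2$ read, for $n$ odd, $\widetilde X^{(n)} = \int_{x_0}^x u_0^2\bigl(\widetilde X^{(n-1)} r_1 + \widetilde X^{(n-3)} r_2\bigr)$ and for $n$ even $\widetilde X^{(n)} = \int_{x_0}^x \widetilde X^{(n-1)}/(u_0^2 p)$; substituting the dictionary gives exactly (\ref{eq: X tilde ZS}), and similarly for (\ref{eq: X ZS}). Hence the recursive systems coincide, the two series (\ref{eq:Sumas-g}) are the series (\ref{eq:Sumas-N}), and the conclusion — including the uniform convergence on $[a,b]$ — follows immediately from Theorem \ref{thm:SL grado N}.

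The second, genuinely necessary step is to check the smoothness hypothesis of Theorem \ref{thm:SL grado N}, namely that ``the application of the operator $L = \frac{d}{dx} p \frac{d}{dx} + q$ makes sense'' on $v_0$ and on the partial sums, i.e.\ that the Pólya factorization $L = \frac{1}{v_0}\,\partial\, (v_0^2/Q)\,\partial\, \frac{1}{v_0}$ is legitimate. This is where the extra hypotheses $v_0 \in C^1[a,b]$ and $v_0'/Q \in C^1[a,b]$ enter: the inner expression $v_0^2/Q \cdot \partial(v/v_0)$, when $v = v_0$, is $v_0^2/Q \cdot 0 = 0$, but to differentiate it once more we need $v_0'/Q$ to be $C^1$, which is exactly the statement that $(\frac{1}{Q}v_0')' $ exists and is continuous — i.e.\ that $v_0$ genuinely solves (\ref{eq:ZS-pencil-cero}) in the classical sense. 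I would remark that under these conditions the coefficient functions appearing in the recursive integrals are continuous (as listed), so every $\widetilde X^{(n)}$, $X^{(n)}$ is well defined and $C^1$, and the term-by-term differentiation argument from the proof of Theorem \ref{thm:SL grado N} applies verbatim.

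Finally I would reconstruct $v_1$ from $v_2$ via (\ref{eq: v1 despejada}) if one wants the statement phrased for the original system, though as literally stated the Corollary only asserts the form of $v_2 = c_1 g_1 + c_2 g_2$, so this last point is optional. The proof therefore consists essentially of the sentence: \emph{``This is Theorem \ref{thm:SL grado N} applied to (\ref{eq:ZS-pencil}) with $p = 1/Q$, $q = P$, $N = 2$, $r_1 = Q'/Q^2$, $r_2 = 1/Q$; the hypotheses $v_0, v_0'/Q \in C^1[a,b]$ guarantee that $v_0$ is a classical non-vanishing solution of (\ref{eq:ZS-pencil-cero}) and that the operator admits the required Pólya factorization, while the continuity of $v_0^2 Q'/Q^2$, $v_0^2/Q$, $Q/v_0^2$ is the continuity of $u_0^2 r_1$, $u_0^2 r_2$, $1/(u_0^2 p)$ demanded there; the recursive relations (\ref{eq: X tilde ZS})--(\ref{eq: X ZS}) are then literally (\ref{eq: equis tilde})--(\ref{eq: equis sin tilde}) with $N=2$.''} The only thing to be careful about — and the ``main obstacle,'' such as it is — is to verify that the index shifts in the $N=2$ specialization of (\ref{eq: equis tilde})--(\ref{eq: equis sin tilde}) really do produce the $\widetilde X^{(n-1)}$ and $\widetilde X^{(n-3)}$ terms with the stated coefficients (using $2k-1 = n$ and $n - 2\cdot 1 + 1 = n-1$, $n - 2\cdot 2 + 1 = n-3$), which is a one-line bookkeeping check.
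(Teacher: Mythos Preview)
Your proposal is correct and takes exactly the approach the paper does: the paper simply states that the corollary ``is a direct application of Theorem \ref{thm:SL grado N} to equation (\ref{eq:ZS-pencil})'' and gives no further proof. Your dictionary $p=1/Q$, $q=P$, $N=2$, $r_1=Q'/Q^2$, $r_2=1/Q$ and the verification that the continuity hypotheses and the recursive relations match is precisely the content needed to justify that one-line claim, and your additional remarks on the $C^1$ hypotheses and the P\'olya factorization are more detail than the paper itself provides.
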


Corollary \ref{thm:SPPS pencil} allows us to construct a general
solution of the Zakharov-Shabat system.

\begin{theorem} \label{thm:Sol-ZS} Under the conditions of Corollary \ref{thm:SPPS pencil} the general solution of
the system \eqref{eq:ZS1}, \eqref{eq:ZS2} has the form
\begin{align}
v_{1} & =-c_{1}\left(\frac{v_{0}^{\prime}+\lambda v_{0}}{Q}\sum_{k=0}^{\infty}\lambda^{k}\widetilde{X}^{\left(2k\right)}+\frac{\lambda}{v_{0}}\sum_{k=0}^{\infty}\lambda^{k}\widetilde{X}^{\left(2k+1\right)}\right)\label{eq: v1 ZS}\\
 & \qquad-c_{2}\left(\frac{v_{0}^{\prime}+\lambda v_{0}}{Q}\sum_{k=0}^{\infty}\lambda^{k}X^{\left(2k+1\right)}+\frac{1}{v_{0}}\sum_{k=0}^{\infty}\lambda^{k}X^{\left(2k\right)}\right),\nonumber \\
v_{2} & =c_{1}v_{0}\sum_{k=0}^{\infty}\lambda^{k}\widetilde{X}^{\left(2k\right)}+c_{2}v_{0}\sum_{k=0}^{\infty}\lambda^{k}X^{\left(2k+1\right)}\label{eq:v2}
\end{align}
where $c_{1}$ and $c_{2}$ are arbitrary complex constants and the
functions $X^{(n)}$ and $\widetilde{X}^{(n)}$ are the same as in
Corollary \ref{thm:SPPS pencil}. \end{theorem}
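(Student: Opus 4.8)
The plan is to combine Corollary \ref{thm:SPPS pencil}, which already supplies the general solution $v_{2}=c_{1}g_{1}+c_{2}g_{2}$ of the scalar equation \eqref{eq:ZS-pencil}, with the substitution formula \eqref{eq: v1 despejada} recovering $v_{1}$ from $v_{2}$. First I would record the equivalence, already implicit in the derivation of \eqref{eq:ZS-pencil}, between the first-order system \eqref{eq:ZS1}--\eqref{eq:ZS2} and the pair consisting of \eqref{eq:ZS-pencil} together with the relation $v_{1}=-\frac{1}{Q}(v_{2}^{\prime}+\lambda v_{2})$: since $Q$ does not vanish, \eqref{eq:ZS2} solved for $v_{1}$ is exactly \eqref{eq: v1 despejada}, and then \eqref{eq:ZS1} written for this $v_{1}$ becomes, after differentiating $\frac{1}{Q}(v_{2}^{\prime}+\lambda v_{2})$ and regrouping, precisely \eqref{eq:ZS-pencil}. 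Hence $v_{2}\mapsto(v_{1},v_{2})$ is a linear isomorphism of the two-dimensional solution space of \eqref{eq:ZS-pencil} onto the solution space of the system, so that \eqref{eq:v2} is merely \eqref{eq:Sumas-g} restated, and it remains only to compute $-\frac{1}{Q}(g_{j}^{\prime}+\lambda g_{j})$ for $j=1,2$.

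For this I would differentiate the series defining $g_{1}$ and $g_{2}$ term by term, which is legitimate because the uniform convergence on $[a,b]$ of the differentiated series was part of the conclusion of Theorem \ref{thm:SL grado N} (invoked in Corollary \ref{thm:SPPS pencil}). Using the even-index branch of \eqref{eq: X tilde ZS}, i.e. $(\widetilde{X}^{(2k)})^{\prime}=\widetilde{X}^{(2k-1)}\frac{Q}{v_{0}^{2}}$, one gets
\[
g_{1}^{\prime}=v_{0}^{\prime}\sum_{k\ge0}\lambda^{k}\widetilde{X}^{(2k)}+\frac{Q}{v_{0}}\sum_{k\ge1}\lambda^{k}\widetilde{X}^{(2k-1)},
\]
and similarly, from the odd-index branch of \eqref{eq: X ZS}, $(X^{(2k+1)})^{\prime}=X^{(2k)}\frac{Q}{v_{0}^{2}}$, so that $g_{2}^{\prime}=v_{0}^{\prime}\sum_{k\ge0}\lambda^{k}X^{(2k+1)}+\frac{Q}{v_{0}}\sum_{k\ge0}\lambda^{k}X^{(2k)}$. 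Then a short computation yields
\[
-\frac{1}{Q}\left(g_{1}^{\prime}+\lambda g_{1}\right)=-\left(\frac{v_{0}^{\prime}+\lambda v_{0}}{Q}\sum_{k\ge0}\lambda^{k}\widetilde{X}^{(2k)}+\frac{1}{v_{0}}\sum_{k\ge1}\lambda^{k}\widetilde{X}^{(2k-1)}\right),
\]
and after the index shift $\sum_{k\ge1}\lambda^{k}\widetilde{X}^{(2k-1)}=\lambda\sum_{k\ge0}\lambda^{k}\widetilde{X}^{(2k+1)}$ this is exactly the $c_{1}$-bracket in \eqref{eq: v1 ZS}; the analogous manipulation for $g_{2}$, which needs no shift, produces the $c_{2}$-bracket. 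Collecting the two contributions gives \eqref{eq: v1 ZS}, and together with \eqref{eq:v2} this completes the proof.

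I expect the only real obstacle to be bookkeeping: keeping the index ranges straight, handling the single index shift in the $\widetilde{X}$-series correctly, and remembering that the term-by-term differentiation is licensed by the uniform-convergence statement of Theorem \ref{thm:SL grado N}. The system/scalar-equation equivalence is routine but should be stated carefully enough that the notion of \emph{general} solution is genuinely transported between the two settings.
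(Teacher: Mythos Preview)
Your proposal is correct and follows exactly the approach of the paper: invoke Corollary~\ref{thm:SPPS pencil} for $v_{2}$, then recover $v_{1}$ from \eqref{eq: v1 despejada}. The paper's own proof is in fact much terser than yours---it simply asserts that $v_{1}$ defined by \eqref{eq: v1 despejada} has the form \eqref{eq: v1 ZS}---so your explicit term-by-term differentiation and index shift spell out details the authors leave to the reader.
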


\begin{proof} By Corollary \ref{thm:SPPS pencil} a general solution of \eqref{eq:ZS-pencil} is given by (\ref{eq:v2}).
Define $v_{1}$ by (\ref{eq: v1 despejada}), then $v_{1}$ has the
form (\ref{eq: v1 ZS}) and the pair $v_{1}$, $v_{2}$ satisfy \eqref{eq:ZS1}, \eqref{eq:ZS2}. \end{proof}

\begin{remark} \label{Rem Comparison}In comparison with \cite{Kravchenko-Velasco}
where the additional conditions $Q=P$ and $Q$ being real-valued
were required, this theorem establishes the SPPS representation for
solutions of the system \eqref{eq:ZS1}, \eqref{eq:ZS2} allowing complex-valued coefficients, additionally requiring one of them being non-vanishing. \end{remark}

\subsection{The Zakharov-Shabat eigenvalue problem\label{ZShabat eigenvalue problem}}

In this subsection we consider classical Zakha\-rov-Shabat systems characterized
by the condition $Q=P^{\ast}$ (where $\ast$ denotes the complex
conjugation). This case arises in physical models associated with
optical solitons, see, e.g., \cite{Ablowitz y Segur,Desaix,Klaws Shaw Elsevier,Klaus Shaw,Kravchenko-Velasco,Zakharov-Shabat}.

\begin{definition}[\cite{Klaws Shaw Elsevier,Klaus Shaw}]
Solutions
of the Zakharov-Shabat system (\ref{eq:ZS1}), (\ref{eq:ZS2}) satisfying
the following asymptotic relations:
\begin{align*}
\overrightarrow{\sigma} & \cong\binom{1}{0}e^{\lambda x},\qquad x\rightarrow-\infty\\
\overrightarrow{\xi} & \cong\binom{0}{1}e^{-\lambda x},\qquad x\rightarrow+\infty
\end{align*}
for some $\lambda$ with $\operatorname{Re}\lambda>0$, are called
Jost solutions. Expression $\binom{\sigma_{1}}{\sigma_{2}}\cong\binom{1}{0}e^{\lambda x},$
$x\rightarrow\pm\infty$ means that there exists $\lim_{x\rightarrow\pm\infty}\frac{\sigma_{1}(x)}{e^{\lambda x}}=c$,
$c\neq0$, while $\sigma_{2}(x)=o\left(\sigma_{1}(x)\right)$, $x\rightarrow\pm\infty$.
\end{definition}

The eigenvalue problem for the Zakharov-Shabat system consists in
finding such values of the spectral parameter $\lambda$ for which
there exists a nontrivial Jost solution.

In particular, when the potential $Q$ is compactly supported on $\left[-a,a\right]$ (this situation usually appears after truncating an infinitely supported potential),
it is easy to see that the eigenvalue problem reduces to finding such
values of $\lambda$ ($\operatorname{Re} \lambda>0$) for which there exists a solution
of (\ref{eq:ZS1}), (\ref{eq:ZS2}) on $(-a,a)$ satisfying the following
boundary conditions (see, e.g., \cite{Klaus Shaw}, \cite{Klaus Shaw ZS}, \cite{Kravchenko-Velasco})
\begin{align}
v_{1}\left(-a\right)=1,\quad & v_{2}\left(-a\right)=0,\label{eq: Jost -a}\\
v_{1}\left(a\right) & =0.\label{eq: Jost +a}
\end{align}
We assume additionally that the potential $Q$ does not vanish on
its support.

\subsection{Dispersion equation for the eigenvalue problem}\label{Subsect DE ZS}

In this subsection we write down the dispersion (or characteristic)
equation equivalent to the Zakharov-Shabat eigenvalue problem with
a compactly supported potential.

\begin{theorem} Let $Q$ be a continuous complex-valued non-vanishing
function on $\left[-a,a\right]$, and $v_{0}$ be a particular non-vanishing
solution of \eqref{eq:ZS-pencil-cero} satisfying the conditions of Corollary \ref{thm:SPPS pencil}. Then $\lambda$ ($\operatorname{Re}\lambda>0$)
is an eigenvalue of the spectral problem for the Zakharov-Shabat system
\eqref{eq:ZS1}, \eqref{eq:ZS2}, \eqref{eq: Jost -a}, \eqref{eq: Jost +a}
if and only if the following equation is satisfied
\begin{equation}
\sum_{k=0}^{\infty}\lambda^{k}\left(v_{0}\left(a\right)\left(v_{0}^{\prime}\left(a\right)X^{\left(2k+1\right)}(a)+v_{0}\left(a\right)X^{\left(2k-1\right)}\left(a\right)\right)+Q(a)X^{\left(2k\right)}\left(a\right)\right)=0,\label{eq: caracteristica}
\end{equation}
where the functions $X^{(n)}$ are defined by \eqref{eq: X ZS} with
$x_{0}=-a$. \end{theorem}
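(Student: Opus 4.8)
The plan is to derive the characteristic equation directly from the explicit general solution of the Zakharov--Shabat system provided by Theorem \ref{thm:Sol-ZS}, taking the base point $x_{0}=-a$. Since the potential is compactly supported and non-vanishing on $[-a,a]$, by the reduction recalled in Subsection \ref{ZShabat eigenvalue problem} a value $\lambda$ with $\operatorname{Re}\lambda>0$ is an eigenvalue precisely when the boundary value problem \eqref{eq:ZS1}, \eqref{eq:ZS2}, \eqref{eq: Jost -a}, \eqref{eq: Jost +a} on $(-a,a)$ admits a (necessarily nontrivial) solution, so it suffices to analyze this boundary value problem. Because \eqref{eq: Jost -a} is a full set of initial data at $x=-a$ for the first-order linear system, the solution satisfying \eqref{eq: Jost -a} is unique, and the eigenvalue condition is equivalent to requiring that this particular solution also fulfill \eqref{eq: Jost +a}.

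First I would evaluate \eqref{eq: v1 ZS} and \eqref{eq:v2} at $x=-a$. With $x_{0}=-a$, every recursive integral $X^{(n)}$, $\widetilde X^{(n)}$ vanishes at $-a$ except $X^{(0)}(-a)=\widetilde X^{(0)}(-a)=1$, so that $v_{2}(-a)=c_{1}v_{0}(-a)$ and $v_{1}(-a)=-c_{1}\bigl(v_{0}'(-a)+\lambda v_{0}(-a)\bigr)/Q(-a)-c_{2}/v_{0}(-a)$. Imposing $v_{2}(-a)=0$ and using $v_{0}(-a)\neq 0$ gives $c_{1}=0$; then $v_{1}(-a)=1$ gives $c_{2}=-v_{0}(-a)$. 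Hence the solution determined by \eqref{eq: Jost -a} is
\[
v_{2}=-v_{0}(-a)\,v_{0}\sum_{k=0}^{\infty}\lambda^{k}X^{(2k+1)},\qquad
v_{1}=v_{0}(-a)\left(\frac{v_{0}'+\lambda v_{0}}{Q}\sum_{k=0}^{\infty}\lambda^{k}X^{(2k+1)}+\frac{1}{v_{0}}\sum_{k=0}^{\infty}\lambda^{k}X^{(2k)}\right).
\]

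Next I would impose \eqref{eq: Jost +a}, i.e. $v_{1}(a)=0$. Dividing by the nonzero constant $v_{0}(-a)$ and multiplying through by $Q(a)v_{0}(a)\neq 0$ turns this into
\[
v_{0}(a)\bigl(v_{0}'(a)+\lambda v_{0}(a)\bigr)\sum_{k=0}^{\infty}\lambda^{k}X^{(2k+1)}(a)+Q(a)\sum_{k=0}^{\infty}\lambda^{k}X^{(2k)}(a)=0.
\]
Expanding the product and reindexing the term carrying the extra factor $\lambda$ --- writing $\sum_{k\ge0}\lambda^{k+1}X^{(2k+1)}(a)=\sum_{k\ge0}\lambda^{k}X^{(2k-1)}(a)$, which is legitimate because $X^{(-1)}\equiv 0$ --- collapses the left-hand side into the single series \eqref{eq: caracteristica}. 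All three of these series are, by Corollary \ref{thm:SPPS pencil}, uniformly convergent on $[-a,a]$, hence define entire functions of $\lambda$, so the rearrangements are valid and \eqref{eq: caracteristica} is a bona fide analytic equation. Since every step is reversible, \eqref{eq: caracteristica} holds if and only if the solution displayed above satisfies \eqref{eq: Jost +a}; as that solution already satisfies \eqref{eq: Jost -a} and is nontrivial (its $v_{1}$-component at $-a$ equals $1$), this is exactly the eigenvalue condition.

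The argument is essentially a bookkeeping computation; the only place deserving attention is the last reindexing, where one must combine the contributions $v_{0}(a)v_{0}'(a)X^{(2k+1)}(a)+Q(a)X^{(2k)}(a)$ and $v_{0}(a)^{2}\lambda X^{(2k+1)}(a)$ so that the shift of index reproduces precisely the term $v_{0}(a)^{2}X^{(2k-1)}(a)$ occurring inside the parentheses of \eqref{eq: caracteristica}. I do not expect any genuine analytic obstacle.
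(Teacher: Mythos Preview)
Your proposal is correct and follows essentially the same approach as the paper: use the general solution from Theorem~\ref{thm:Sol-ZS} with $x_{0}=-a$, determine $c_{1}=0$ and $c_{2}=-v_{0}(-a)$ from \eqref{eq: Jost -a}, impose \eqref{eq: Jost +a}, and reindex using $X^{(-1)}\equiv0$ to obtain \eqref{eq: caracteristica}. Your write-up is in fact more careful than the paper's, spelling out the multiplication by $Q(a)v_{0}(a)$, the reindexing step, and the reversibility of the argument.
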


\begin{proof} Considering $v_{1}$ and $v_{2}$ from Theorem \ref{thm:Sol-ZS}
with $x_{0}=-a$ we have $v_{1}\left(-a\right)=-c_{1}\left(\frac{v_{0}^{\prime}+\lambda v_{0}\left(-a\right)}{Q\left(-a\right)}\right)-c_{2}\left(\frac{1}{v_{0}\left(-a\right)}\right)$
and $v_{2}\left(-a\right)=c_{1}v_{0}(-a)$. From the boundary condition
(\ref{eq: Jost -a}) we obtain that $c_{1}=0$ and $c_{2}=-v_{0}\left(-a\right)$.
According to the boundary condition (\ref{eq: Jost +a}) we obtain
that the characteristic equation has the form
\[
\frac{v_{0}^{\prime}\left(a\right)+\lambda v_{0}\left(a\right)}{Q\left(a\right)}\sum_{k=0}^{\infty}\lambda^{k}X^{\left(2k+1\right)}\left(a\right)+\frac{1}{v_{0}\left(a\right)}\sum_{k=0}^{\infty}\lambda^{k}X^{\left(2k\right)}\left(a\right)=0
\]
which is equivalent to (\ref{eq: caracteristica}) taking into account
that $X^{(-1)}\equiv0$. \end{proof}

\begin{remark} When $Q$ is real valued, the characteristic equation
from \cite{Kravchenko-Velasco} can be used since it was obtained
without requiring that $Q$ should be non-vanishing. \end{remark}

\begin{remark}
It is possible to apply the spectral shift technique described in Section \ref{Spectral shift} to the solution of the Zakharov-Shabat system. Note that we change $\lambda$ by $\lambda-\lambda_0$ only in \eqref{eq:Sumas-g} and keep the parameter $\lambda$ in \eqref{eq: v1 despejada}. Hence the characteristic equation \eqref{eq: caracteristica} under the spectral shift becomes
\begin{equation}
\sum_{k=0}^{\infty}(\lambda-\lambda_{0})^{k}\left(v_{0}\left(a\right)
\left((v_{0}^{\prime}\left(a\right)+\lambda_{0}v_{0}\left(a\right))
X^{\left(2k+1\right)}(a)+v_{0}\left(a\right)X^{\left(2k-1\right)}
\left(a\right)\right)+Q(a)X^{\left(2k\right)}\left(a\right)\right)=0.\label{eq:caracteristicashft}
\end{equation}
\end{remark}

This theorem reduces the Zakharov-Shabat eigenvalue problem with a
compactly supported potential to the problem of localizing zeros (in
the right half-plane) of an analytic function $\Phi(\lambda)=\sum_{k=0}^{\infty}a_{k}\lambda^{k}$
of the complex variable $\lambda$ with the Taylor coefficients $a_{k}$
given by the expressions,
\[
a_{k}=v_{0}\left(a\right)\left(v_{0}^{\prime}\left(a\right)X^{\left(2k+1\right)}+v_{0}\left(a\right)X^{\left(2k-1\right)}\left(a\right)\right)+Q(a)X^{\left(2k\right)}\left(a\right).
\]

The coefficients $a_{k}$ can be easily and accurately calculated
following the definitions introduced above. For the numerical solution
of the eigenvalue problem one can truncate the series (\ref{eq: caracteristica})
and consider a polynomial
\begin{equation}
\Phi_{M}\left(\lambda\right)=\sum_{k=0}^{M}a_{k}\lambda^{k}\label{Poly Z-S}
\end{equation}
approximating the function $\Phi$. For a reasonably large $M$ some
of its roots give an accurate approximation of the eigenvalues of
the problem. The Rouché theorem establishes, see, e.g., \cite[Section 3]{Conway},
that if the complex-valued functions $f$ and $g$ are holomorphic
inside and on some closed simple contour $K$, with $|g(z)|<|f(z)|$
on $K$, then $f$ and $f+g$ have the same number of zeros inside
$K$, where each zero is counted as many times as its multiplicity.
As it follows from the Rouché theorem, the roots of $\Phi_{M}$ closest
to zero give an accurate approximation of the eigenvalue problem whilst
the roots more distant from the origin are spurious roots appearing
due to the truncation. Indeed, consider a domain $\Omega$ in the
complex plane of the variable $\lambda$ such that
\begin{equation}
\min_{\lambda\in\partial\Omega}\left\vert \Phi_{M}\left(\lambda\right)\right\vert >\max_{\lambda\in\partial\Omega}\left\vert \Phi(\lambda)-\Phi_{M}\left(\lambda\right)\right\vert \label{inequality from Rouche}
\end{equation}
(that is, $f=\Phi_{M}$, $g=\Phi-\Phi_{M}$ and hence $f+g=\Phi$).
Then the number of zeros of $\Phi_{M}$ in $\Omega$ coincides with
the number of zeros of $\Phi$, or which is the same with the number
of eigenvalues located in $\Omega$. In other words, in a domain $\Omega$
where $\Phi_{M}$ approximates sufficiently closely the function $\Phi$
(the inequality (\ref{inequality from Rouche}) should be fulfilled)
all the roots of $\Phi_{M}$ approximate the true eigenvalues of the
spectral problem. In order to estimate the quantity $\max_{\lambda\in\partial\Omega}\left\vert \Phi(\lambda)-\Phi_{M}\left(\lambda\right)\right\vert $
the estimates derived in the proof of Theorem \ref{thm:SL grado N}
can be used. Obviously, the same reasoning is applicable to the characteristic function from \eqref{eq:caracteristicashft} centered in some $\lambda_0$.

The following example illustrates the application of the SPPS method
to a Zakharov-Shabat spectral problem admitting complex eigenvalues.

\begin{example} \label{Example Z-S Potential Q}Consider the Zakharov-Shabat
system with the potential
\begin{equation}
Q(x)=\begin{cases}
s\left(-1+3\frac{\pi}{4}+3x^{2}\right), & -1\leq x\leq1,\\
0, & \text{ otherwise}
\end{cases}\label{eq: Potencial polinomio}
\end{equation}
where $s\in\mathbb{R}$. This potential was considered in a numerical
experiment in \cite{Klaus Shaw}.

According to \cite{Klaus Shaw}, there is a pair of complex eigenvalues
located symmetrically about the real axis in the right half-plane
when $s$ is in the range $0.956\le s\le 0.9999$. As $s$ increases
the eigenvalues $\lambda_{1}$ and $\lambda_{2}$ approach each other and eventually coalesce into a double eigenvalue. If we further increase the parameter
$s$ a pair of real eigenvalues appear after the passage through a
double eigenvalue state. Table \ref{eq: Tabla lambda 1,2} computed
by means of the SPPS method illustrates the described phenomenon. With the help of the SPPS representation we found more precisely the value of the parameter $s$ when the eigenvalues $\lambda_{1}$ and $\lambda_{2}$ coalesce into a double eigenvalue. The value $s\approx0.9999006472847$ corresponds to the moment when the eigenvalues are the closest.

For the numerical computation we used MATLAB and approximated (\ref{eq: caracteristica})
with $M=100$ and $v_{0}=\exp\left(i\int Q\right)$ as a particular
solution. The recursive integrals $X^{(n)}$ were calculated using
the Newton-Cottes 6 point integration formula of 7-th order (see,
e.g., \cite{Rabinowitz}).

\begin{table}[tbh]
\centering\small
\begin{tabular}{cll}
\hline
$s$  & \multicolumn{1}{c}{$\lambda_{1}$} & \multicolumn{1}{c}{$\lambda_{2}$}\tabularnewline
\hline
$0.956$  & $0.0000544585364-0.6265762379200i$  &
           $0.0000544585364+0.6265762379200i$\tabularnewline
$0.967$  & $0.0076637690047-0.5443495752993i$  &
           $0.0076637690047+0.5443495752993i$\tabularnewline
$0.989$  & $0.0227377545015-0.3155449553793i$  &
           $0.0227377545015+0.3155449553793i$\tabularnewline
$0.9999$ & $0.0301375300344-0.0027328986939i$  &
           $0.0301375300365+0.0027328986939i$\tabularnewline
$0.9999006472847$ & $0.0301625594632+5.0785\cdot10^{-9}i$ &
                    $0.0301635517506-5.0785\cdot10^{-9}i$\tabularnewline
$0.999901$& $0.0283635450766+2.4\cdot10^{-12}i$  &
            $0.0319628654396-2.4\cdot10^{-12}i$\tabularnewline
$0.99991$ & $0.0209514552194+4.9\cdot10^{-13}i$  &
            $0.0393371418191-4.9\cdot10^{-13}i$\tabularnewline
$0.99995$ & $0.0089010060464+1.3\cdot10^{-13}i$  &
            $0.0514417381256-1.3\cdot10^{-13}i$\tabularnewline
$0.99999$ & $0.0015563906608+2.8\cdot10^{-14}i$  &
            $0.0588404994651-3.1\cdot10^{-14}i$\tabularnewline
\hline
\end{tabular}\caption{Eigenvalues behavior depending on the parameter $s$ in Example \ref{Example Z-S Potential Q}.}
\label{eq: Tabla lambda 1,2}
\end{table}

The SPPS method allows one to obtain a graph of the characteristic
function in 3D within several seconds. The location of the roots
on the complex plane for different values of the parameter $s$ is
illustrated by Figure \ref{fig: char eq 3d} where the graphs of the
function $-\log\left\vert \Phi_{M}\right\vert $ are plotted.

\begin{figure}[tbh]
\centering
\begin{tabular}{cc}
\includegraphics[
natwidth=900,
natheight=720,
width=3in,
height=2.4in]{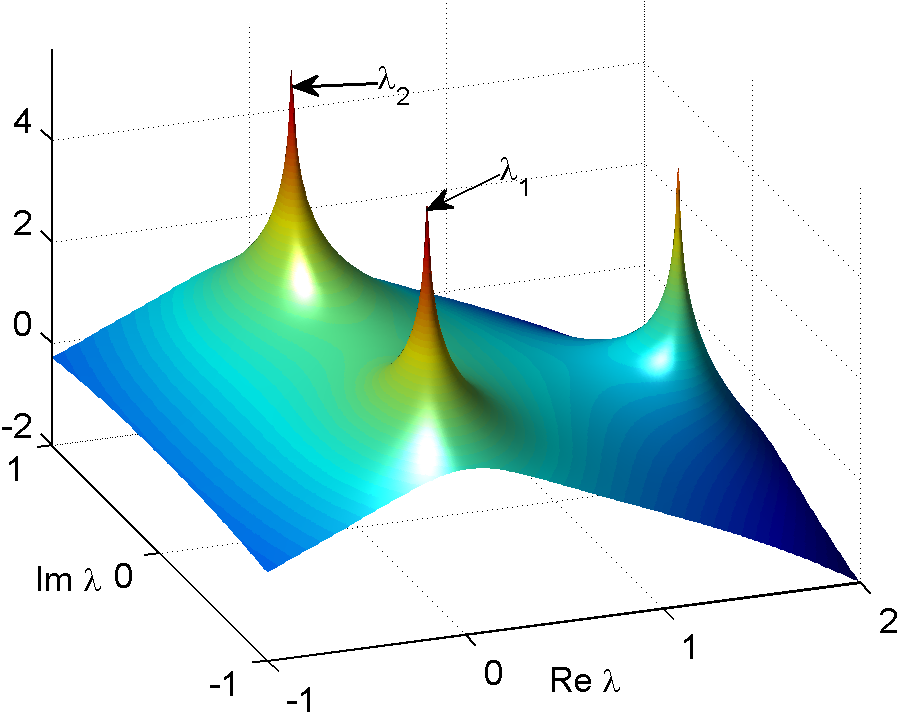} &
\includegraphics[
natwidth=900,
natheight=720,
width=3in,
height=2.4in]{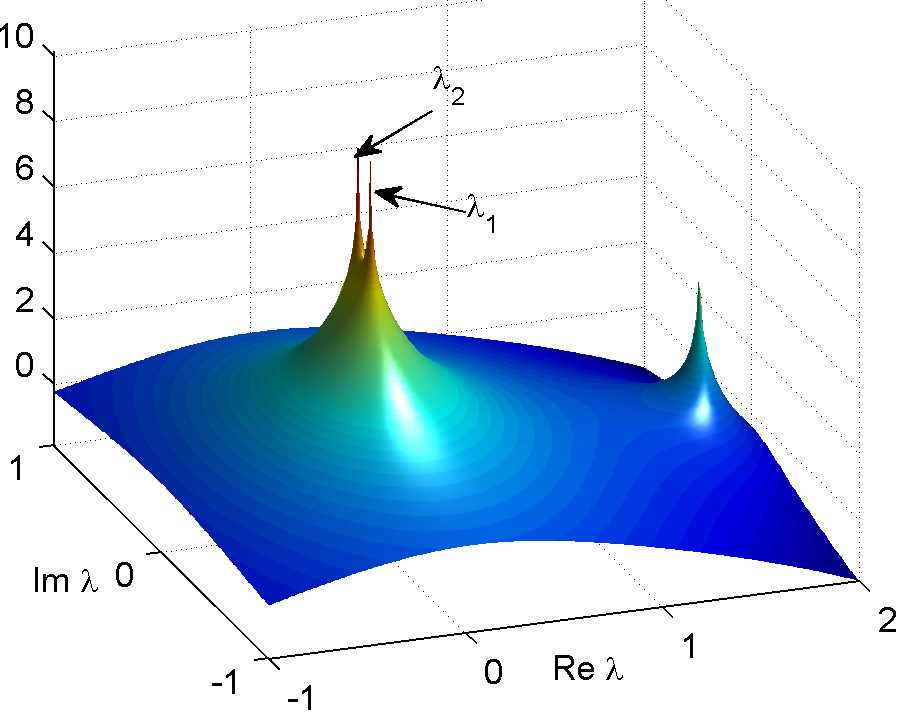} \\
(a) & (b) \medskip\\
\includegraphics[
natwidth=900,
natheight=720,
width=3in,
height=2.4in]{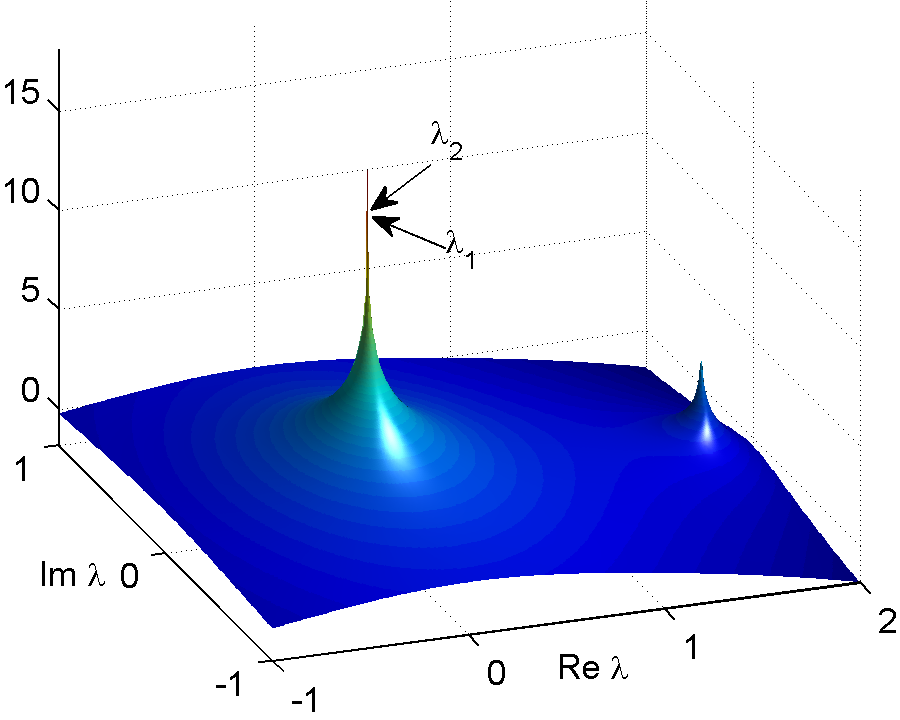} &
\includegraphics[
natwidth=900,
natheight=720,
width=3in,
height=2.4in]{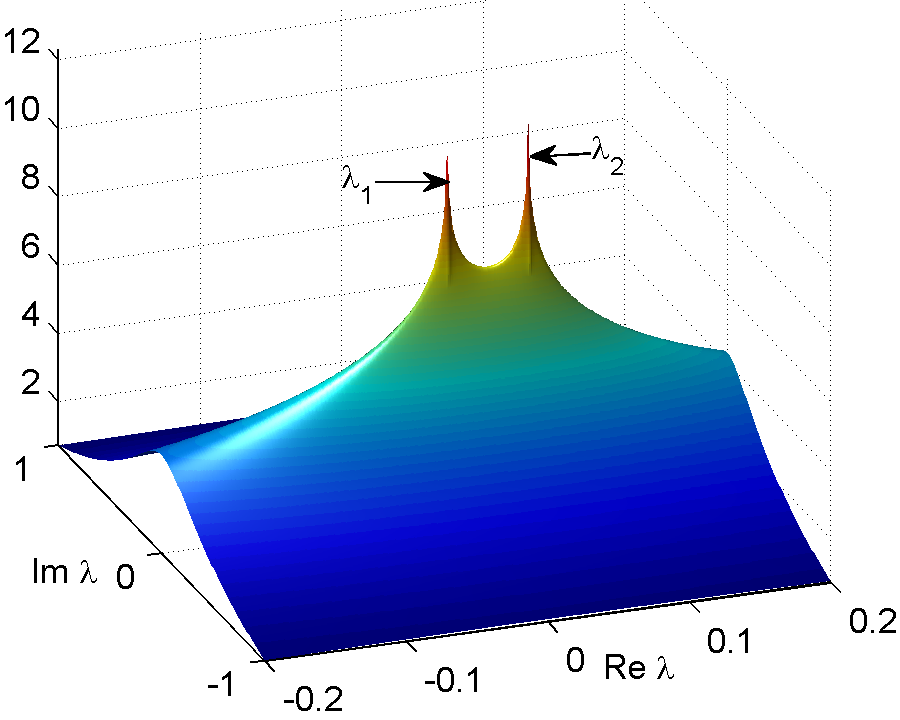} \\
(c) & (d)
\end{tabular}
\caption{Graphs of the function $-\log\left\vert \Phi_{M}\right\vert $ from
Example \ref{Example Z-S Potential Q}. On (a) the peaks on the left
show the position of $\lambda_{1}$ and $\lambda_{2}$ when $s=0.956$.
On (b) at $s=0.9995$ it is possible to see that the peaks on the
left are closer to each other. On (c) the peaks of $\lambda_{1}$
and $\lambda_{2}$ are near to coalesce into a double eigenvalue at
$s=0.9999006472847$, and finally on (d) $\lambda_{1}$ and $\lambda_{2}$
are two different real eigenvalues at $s=0.99999$.}
\label{fig: char eq 3d}
\end{figure}
\end{example}

\subsection{The argument principle}

Since the left-hand side of equation (\ref{eq: caracteristica}) is
an analytic function with respect to $\lambda$, it is possible to
apply the argument principle to locate its zeros and compute their
order. Of course, in the case when an approximate characteristic function
is just a polynomial as in \eqref{Poly Z-S}, various methods
of accurate localization of zeros are known and the usage of the argument
principle can be excessive. Nevertheless even in this case the argument
principle can be useful. As we already know only several roots of
the polynomial \eqref{Poly Z-S} are approximations of the eigenvalues,
all other roots appear due to the truncation of the
series \eqref{eq: caracteristica}. The localization of several roots
closest to the origin using the argument principle followed by several
Newton iterations can be faster than the localization of all the roots
by some general-purpose method. Moreover for more general boundary
conditions, e.g., involving multiplication by an analytic function of $\lambda$, the approximate characteristic function may not
be a polynomial, and the argument principle becomes even more useful (see,
e.g., \cite{Bronski,Dellnitz Schutze Zheng,Ying Katz}). The argument principle consists in the following, see, e.g., \cite{Conway}.

\begin{theorem}[The argument principle]Let $f$ be an analytic function
in a domain $G$ with zeros $z_{1}$, $z_{2}$,...,$z_{n}$ counted
according to their multiplicity. If $\gamma$ is a simple closed rectifiable
curve in $G$, contractible to a point in $G$ and not passing through $z_{1}$, $z_{2}$,...,$z_{n}$,
then
\begin{equation}
\frac{1}{2\pi i}\ointop_{\gamma}\frac{f^{\prime}(z)}{f(z)}dz=\sum_{k=1}^{n}n(\gamma;z_{k}),\label{eq: Integral princ argum}
\end{equation}
where $n(\gamma; z_k)$ is the winding number of $\gamma$ around $z_k$.
\end{theorem}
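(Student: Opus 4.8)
The statement is the classical argument principle, and the plan is to deduce it from the residue theorem applied to the logarithmic derivative $f'/f$. First I would record the local behaviour of $f'/f$ at a zero: if $w$ is a zero of $f$ of multiplicity $m$, then on a neighbourhood of $w$ we may write $f(z)=(z-w)^{m}h(z)$ with $h$ holomorphic and $h(w)\neq0$, whence
\[
\frac{f'(z)}{f(z)}=\frac{m}{z-w}+\frac{h'(z)}{h(z)} .
\]
Since $h'/h$ is holomorphic near $w$, the function $f'/f$ has a simple pole there with residue exactly equal to the multiplicity $m$. As $f$ is holomorphic in $G$, these zeros are the only singularities of $f'/f$ in $G$, and none of them lies on $\gamma$ by hypothesis.

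Next I would invoke the residue theorem in its homology form. Because $\gamma$ is a simple closed rectifiable curve that is contractible to a point in $G$, it is null-homotopic, hence null-homologous, in $G$; consequently the winding number $n(\gamma;w)$ vanishes for every point $w$ outside $G$, and the general Cauchy residue theorem is applicable to the meromorphic function $f'/f$ on $G$. This gives
\[
\frac{1}{2\pi i}\ointop_{\gamma}\frac{f'(z)}{f(z)}\,dz=\sum_{w}n(\gamma;w)\,\operatorname{Res}_{w}\!\left(\frac{f'}{f}\right),
\]
the sum ranging over the distinct zeros $w$ of $f$. Substituting $\operatorname{Res}_{w}(f'/f)=m_{w}$ and noting that listing the zeros $z_{1},\dots,z_{n}$ with multiplicity means precisely that $\sum_{w}m_{w}\,n(\gamma;w)=\sum_{k=1}^{n}n(\gamma;z_{k})$, we arrive at \eqref{eq: Integral princ argum}.

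The only delicate point is the passage from ``contractible to a point in $G$'' to the hypotheses of the residue theorem: one must use that a null-homotopic loop is null-homologous, so that all winding numbers about points of the complement of $G$ vanish and the homology version of Cauchy's theorem applies on the (possibly non-simply connected) domain $G$. Everything else is the elementary residue computation above. Equivalently, one may identify $\tfrac{1}{2\pi i}\ointop_{\gamma}(f'/f)\,dz$ with the winding number of the image curve $f\circ\gamma$ about the origin --- the total change of $\arg f$ along $\gamma$ divided by $2\pi$ --- which is the source of the name; this route leads to the same count. In any event the result is standard, cf.\ \cite{Conway}.
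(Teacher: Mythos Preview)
Your proof is correct and follows the standard route via the residue theorem applied to the logarithmic derivative $f'/f$. Note, however, that the paper does not prove this statement at all: it merely quotes the argument principle as a classical tool, with a reference to \cite{Conway}, and then applies it to locate zeros of the approximate characteristic function. So there is nothing in the paper to compare your argument against; you have supplied a complete and standard proof where the authors simply cite the literature.
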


This theorem applied to the approximate characteristic function (\ref{Poly Z-S})
in a domain restricted by the Rouché theorem (see the discussion above)
allows one to find a precise number of eigenvalues
and can even be used for their accurate location. We illustrate this approach
in the next example for which a routine in MATLAB was written locating
positions of zeros of the approximate characteristic function (\ref{Poly Z-S})
by computing the argument change using $4000$ points along rectangular
contours $\gamma$. If the argument change along certain $\gamma$
is zero the program considers another rectangular contour. Otherwise
the program divides the rectangle into two smaller rectangles and calls the
routine again on each rectangle until a desired tolerance is achieved.
The program evaluates (\ref{eq: Integral princ argum}) over the final
contour $\gamma$ to find the order of the zero and also refines the
position of the zero by the well known residue formula (see e.g. \cite{Bronski,Kravanja})
\[
z_k=\frac{1}{2\pi i N(z_k)}\ointop_{\gamma}\frac{z\Phi_{M}^{\prime}\left(z\right)}{\Phi_{M}\left(z\right)}dz,
\]
where $N(z_k)$ is the multiplicity of the zero $z_k$.

\begin{example} \label{Example Z-S Potential Q ArgPrinciple} Consider
the Zakharov-Shabat system with the potential (\ref{eq: Potencial polinomio}).
We compute the polynomial (\ref{Poly Z-S}) with $M=100$ and present
on Figure \ref{fig: arg principle} the illustration of the work of
the described algorithm. The shadowed areas contain the corresponding
eigenvalues. Eigenvalues $\lambda_{1}$ and $\lambda_{2}$ get closer
to each other and then separate. The plots illustrate the convergence
of the described procedure to the approximate eigenvalues presented
in Table \ref{eq: Tabla Arg Principle}.

\begin{figure}[tbh]
\centering
\begin{tabular}{cc}
\includegraphics[bb=212 324 399 468,width=2.6in,height=2in]{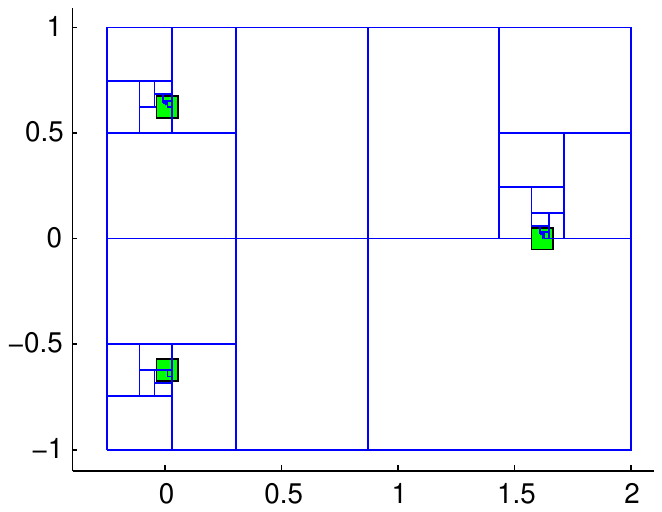}\ & \ \includegraphics[bb=212 324 399 468,width=2.6in,height=2in]{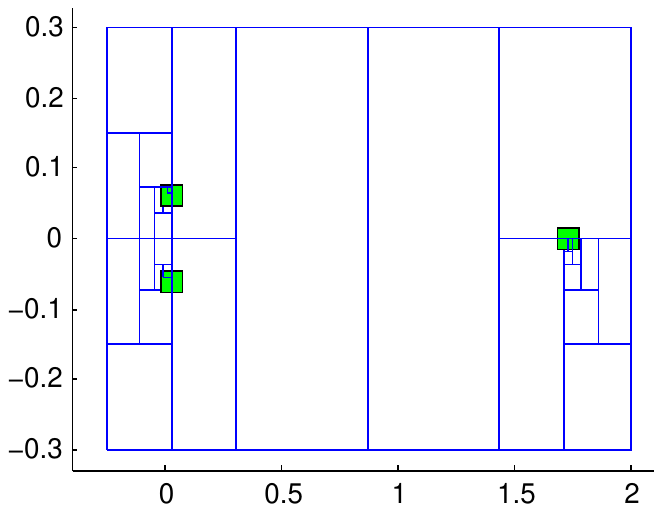}\\
~\\
\includegraphics[bb=212 324 399 468,width=2.6in,height=2in]{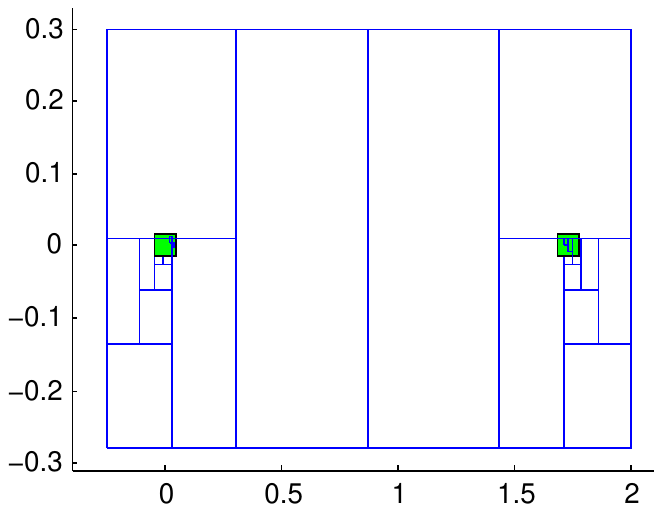}\ &
\ \includegraphics[bb=212 324 399 468,width=2.6in,height=2in]{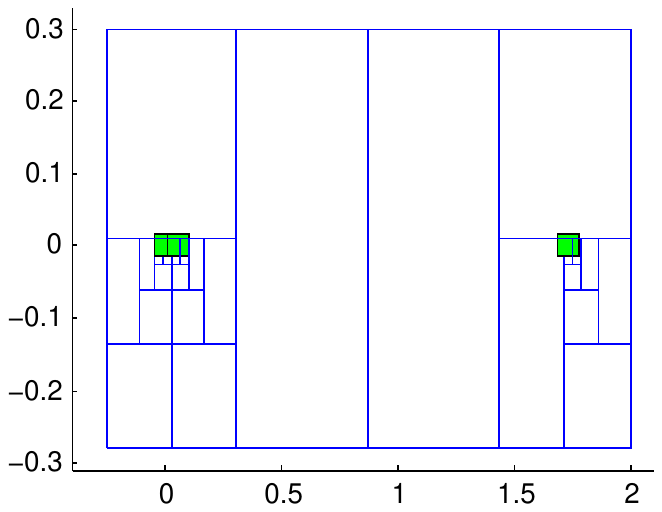}
\end{tabular}
\caption{Illustration of the algorithm based on the argument principle in Example
\ref{Example Z-S Potential Q ArgPrinciple} for $s=0.956$ (top left),
$s=0.9995$ (top right), $s=0.9999006472847$ (bottom left) and $s=0.99999$
(bottom right). Shadowed rectangles mark positions of the eigenvalues.}
\label{fig: arg principle}
\end{figure}

\begin{table}[tbh]
\centering %
\begin{tabular}{ccc}
\hline
$s$  & $\lambda_{1,2}$  & $\lambda_{3}$  \tabularnewline
\hline
$0.956$  & $0.0124269786\pm0.6259290726i$  & $1.6264287578+2\cdot10^{-15}i$  \tabularnewline
$0.9995$ & $0.0299931318\pm0.0606462518i$  & $1.7346285373-1.5\cdot10^{-14}i$  \tabularnewline
$0.9999006472847$  & %
\begin{tabular}{c}
$0.0301630556+3.9\cdot10^{-15}i$,\tabularnewline
double eigenvalue\tabularnewline
\end{tabular} & $1.7356369172-2\cdot10^{-15}i$  \tabularnewline
$0.99999$  & %
\begin{tabular}{l}
$0.0588404994+3.1\cdot10^{-14}i$,\tabularnewline
$0.0015563906-2.8\cdot10^{-14}i$\tabularnewline
\end{tabular} & $1.7358620206+10^{-15}i.$  \tabularnewline
\hline
\end{tabular}\caption{Eigenvalues behavior depending on the parameter $s$ in Example \ref{Example Z-S Potential Q}.}
\label{eq: Tabla Arg Principle}
\end{table}
\end{example}

\subsection{Zakharov-Shabat systems with complex potentials.}

In this subsection we compare the performance of the SPPS method with
the results presented in \cite{Bronski,Tovbis} where the semi-classical
scaling of the non-self-adjoint Zakharov-Shabat scattering problem
\begin{eqnarray}
i\epsilon v_{x} & = & q\omega+\Lambda\upsilon\label{eq:nsazs1}\\
i\epsilon\omega_{x} & = & q^{*}\upsilon-\Lambda\omega,\label{eq:nsazs2}
\end{eqnarray}
is considered, see also \cite{KLL2013}. The potential function $q$ is of the form
\begin{eqnarray}
q(x) & = & A(x)e^{iS(x)/\epsilon}.\label{eq: potencial q}
\end{eqnarray}
Considering notations $Q=\frac{i}{\epsilon}q^{*}$ and $\lambda=-\frac{i}{\epsilon}\Lambda$ one obtains
the Zakharov-Shabat system (\ref{eq:ZS1}), (\ref{eq:ZS2}).

In order to apply the results of Subsections \ref{ZShabat eigenvalue problem} and \ref{Subsect DE ZS} we approximate the potential $q$ by a function compactly supported on $[-a,a]$. If $a$ is chosen to be sufficiently
large the error due to this truncation of the potential is sufficiently small.

\begin{example}[\cite{Bronski}] \label{Example String 1-1}Consider the functions
\[
A(x) = \sech(2x),\qquad  S(x) = \sech(2x)
\]
for the potential \eqref{eq: potencial q}.
We approximated the potential with
\begin{equation*}
\hat{q}(x)=\begin{cases}
A(x)e^{iS(x)/\epsilon}, & -a\leq x\leq a,\\
0, & \text{ otherwise}
\end{cases}
\end{equation*}
for $a=10$. The approximate eigenvalues obtained by the SPPS method for various values of $\epsilon$ are shown on Figure \ref{Fig1-1} and agree with the results from \cite{Bronski}. The presented results were obtained with the help of the spectral shift technique in Matlab using machine precision arithmetics, $M=250$ and 100000 points for the computation of the formal powers. For the smaller
values of $\epsilon$ considered in \cite{Bronski} the SPPS method
required to use high-precision arithmetics due to high oscillatory
potential $q$ and nearly vanishing particular solution $u_{0}$.
We chose not to include these numerical experiments.

\begin{figure}[tbh]
\centering
\begin{tabular}{ccc}
\includegraphics[
bb=219 324 391 468,
width=2.4in,
height=2in]{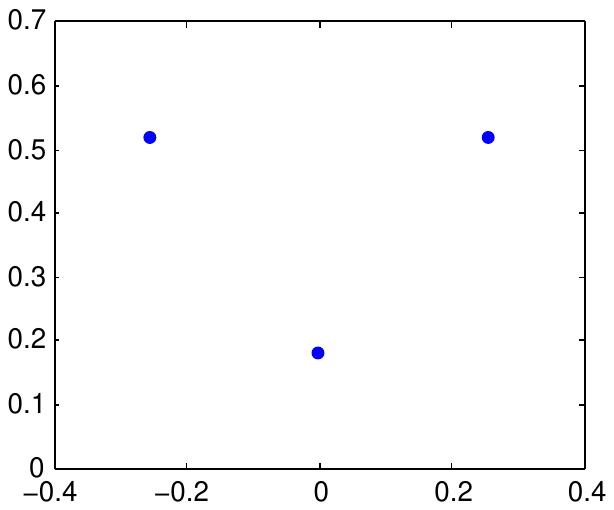} & &
\includegraphics[
bb=219 324 391 468,
width=2.4in,
height=2in]{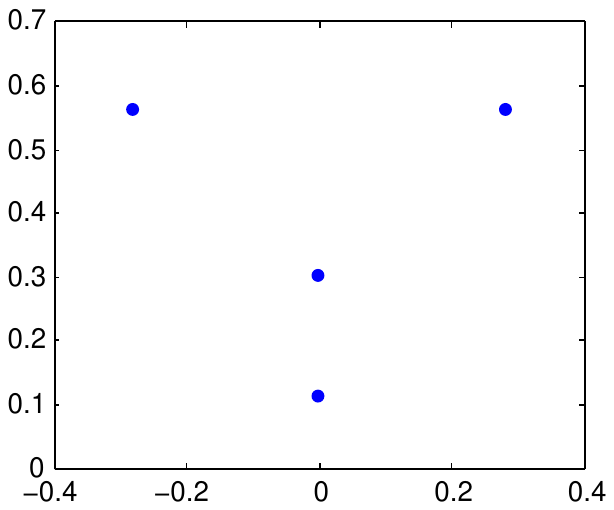}\\
(a) $\epsilon$=0.2 & & (b) $\epsilon$=0.159\\
\includegraphics[
bb=219 324 391 468,
width=2.4in,
height=2in]{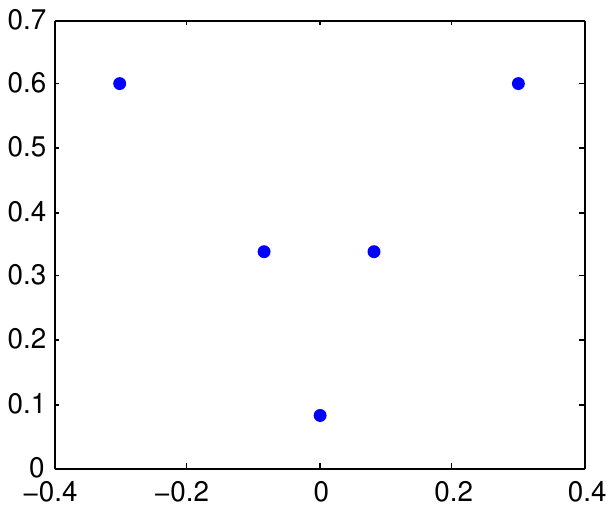} & &
\includegraphics[
bb=219 324 391 468,
width=2.4in,
height=2in]{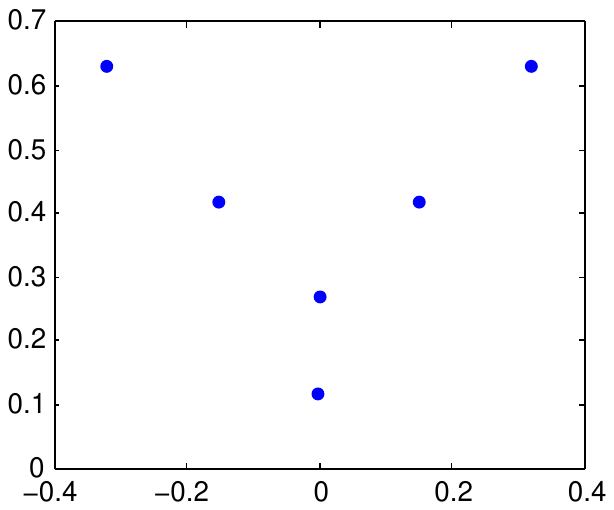}\\
(c) $\epsilon$=0.126 & & (d) $\epsilon$=0.1 \\
\includegraphics[
bb=219 324 391 468,
width=2.4in,
height=2in]{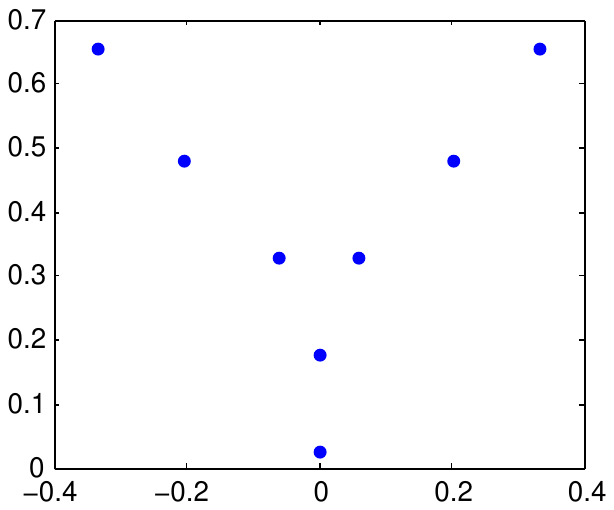} & &
\includegraphics[
bb=219 324 391 468,
width=2.4in,
height=2in]{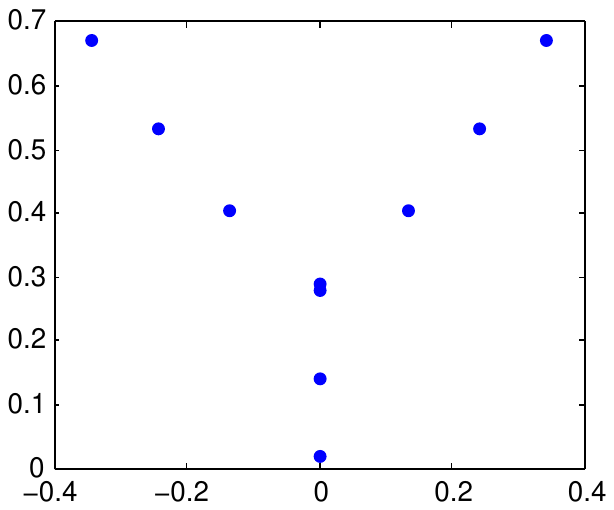}\\
(e) $\epsilon$=0.0794 & & (f) $\epsilon$=0.063
\end{tabular}
\caption{Eigenvalue locations from Example \ref{Example String 1-1} for $\epsilon$=0.2,
0.159, 0.126, 0.1, 0.0794 and 0.063.}
\label{Fig1-1}
\end{figure}
\end{example}

\begin{example} \label{Example String 1-1-1}
In \cite{Tovbis} the Zakharov-Shabat system (\ref{eq:nsazs1}), (\ref{eq:nsazs2})
with the potential \eqref{eq: potencial q} was considered for
\[
A(x)=-\sech x,\qquad S^{\prime}(x)=-\mu\tanh x.
\]
This potential is interesting because an exact formula for its
eigenvalues is known and is given by
\[
z_{n}=i\left(\sqrt{1-\frac{1}{4}\mu^{2}}-\varepsilon\left(n-\frac{1}{2}\right)\right),
\]
where the index $n$ ranges strictly over the positive integers that
satisfy
\begin{equation}
n(\mu,\varepsilon)<\frac{1}{\varepsilon}\sqrt{1-\frac{\mu^{2}}{4}}+\frac{1}{2}.\label{eq:evs Tovbis}
\end{equation}
Examples of the absolute error of the SPPS method for various values
of $\mu$ and $\epsilon$ for this example are given in Table \ref{eq: Tabla String 1-1}.

\begin{table}[tbh]
\centering%
\begin{tabular}{ccccc}
\hline
$\mu$  & $\epsilon$  & %
\begin{tabular}{c}
Number of\tabularnewline
eigenvalues\tabularnewline
according to \eqref{eq:evs Tovbis}\tabularnewline
\end{tabular} & Eigenvalues  & Abs. error\tabularnewline
\hline
$0.5$  & $0.5$  & $2$  & %
\begin{tabular}{c}
$0.218245870595339i$\tabularnewline
$0.718245836551895i$\tabularnewline
\end{tabular} & %
\begin{tabular}{c}
$3.4\cdot10^{-8}$\tabularnewline
$5.5\cdot10^{-14}$\tabularnewline
\end{tabular}\tabularnewline
\hline
$1$  & $0.4$  & $2$  & %
\begin{tabular}{c}
$0.266025403665342i$\tabularnewline
$0.666025403780913i$\tabularnewline
\end{tabular} & %
\begin{tabular}{c}
$1.6\cdot10^{-11}$\tabularnewline
$1.8\cdot10^{-12}$\tabularnewline
\end{tabular}\tabularnewline
\hline
$0.5$  & $0.3$  & $3$  & %
\begin{tabular}{c}
$0.218245836882759i$\tabularnewline
$0.518245836565527i$\tabularnewline
$0.818245836562013i$\tabularnewline
\end{tabular} & %
\begin{tabular}{c}
$1.2\cdot10^{-11}$\tabularnewline
$9.3\cdot10^{-12}$\tabularnewline
$6.9\cdot10^{-12}$\tabularnewline
\end{tabular}\tabularnewline
\hline
$0.5$  & $0.2$  & $5$  & %
\begin{tabular}{c}
$0.068246698084900i$\tabularnewline
$0.268245836829048i$\tabularnewline
$0.468245836567762i$\tabularnewline
$0.668245836561404i$\tabularnewline
$0.868245836536161i$\tabularnewline
\end{tabular} & %
\begin{tabular}{c}
$2.9\cdot10^{-9}$\tabularnewline
$2.7\cdot10^{-10}$\tabularnewline
$1.4\cdot10^{-9}$\tabularnewline
$8.2\cdot10^{-11}$\tabularnewline
$3.4\cdot10^{-12}$\tabularnewline
\end{tabular}\tabularnewline
\hline
$1$  & $0.15$  & $6$  & %
\begin{tabular}{c}
$0.791025403810370i$\tabularnewline
$0.641025403330507i$\tabularnewline
$0.491025404649274i$\tabularnewline
$0.341019810999524i$\tabularnewline
$0.191026126515883i$\tabularnewline
$0.041103889485868i$\tabularnewline
\end{tabular} & %
\begin{tabular}{c}
$7.3\cdot10^{-11}$\tabularnewline
$1.8\cdot10^{-10}$\tabularnewline
$3.5\cdot10^{-9}$\tabularnewline
$5.8\cdot10^{-6}$\tabularnewline
$1.0\cdot10^{-7}$\tabularnewline
$2.1\cdot10^{-5}$\tabularnewline
\end{tabular}\tabularnewline
\hline
$1$  & $0.13$  & $7$  & %
\begin{tabular}{c}
$0.801025400788594i$\tabularnewline
$0.671025403049713i$\tabularnewline
$0.541025417533800i$\tabularnewline
$0.411024735536490i$\tabularnewline
$0.281025134994191i$\tabularnewline
$0.151030141516677i$\tabularnewline
$0.028942251261828i$\tabularnewline
\end{tabular} & %
\begin{tabular}{c}
$6.7\cdot10^{-8}$\tabularnewline
$3.1\cdot10^{-9}$\tabularnewline
$1.3\cdot10^{-8}$\tabularnewline
$7.8\cdot10^{-5}$\tabularnewline
$1.3\cdot10^{-7}$\tabularnewline
$1.0\cdot10^{-6}$\tabularnewline
$2.7\cdot10^{-3}$\tabularnewline
\end{tabular}\tabularnewline
\hline
$1$  & $0.12$  & $7$  & %
\begin{tabular}{c}
$0.806025211476463i$\tabularnewline
$0.686025403243744i$\tabularnewline
$0.566025442436003i$\tabularnewline
$0.446080832925892i$\tabularnewline
$0.326025177679161i$\tabularnewline
$0.206028821238862i$\tabularnewline
$0.085971656937961i$\tabularnewline
\end{tabular} & %
\begin{tabular}{c}
$4.0\cdot10^{-6}$\tabularnewline
$8.8\cdot10^{-9}$\tabularnewline
$1.3\cdot10^{-7}$\tabularnewline
$2.7\cdot10^{-6}$\tabularnewline
$2.8\cdot10^{-6}$\tabularnewline
$5.9\cdot10^{-6}$\tabularnewline
$8.0\cdot10^{-5}$\tabularnewline
\end{tabular}\tabularnewline
\hline
$0.5$  & $0.12$  & $8$  & %
\begin{tabular}{c}
$0.913160602214353i$\tabularnewline
$0.788245879788585i$\tabularnewline
$0.668245834591005i$\tabularnewline
$0.548245717744945i$\tabularnewline
$0.428227707506773i$\tabularnewline
$0.308246463131394i$\tabularnewline
$0.188245728430750i$\tabularnewline
$0.068265536390608i$\tabularnewline
\end{tabular} & %
\begin{tabular}{c}
$1.1\cdot10^{-2}$\tabularnewline
$1.3\cdot10^{-7}$\tabularnewline
$1.8\cdot10^{-4}$\tabularnewline
$8.5\cdot10^{-8}$\tabularnewline
$2.6\cdot10^{-5}$\tabularnewline
$6.7\cdot10^{-7}$\tabularnewline
$1.0\cdot10^{-6}$\tabularnewline
$1.2\cdot10^{-5}$\tabularnewline
\end{tabular}\tabularnewline
\hline
\end{tabular}\caption{Eigenvalue errors from Example \ref{Example String 1-1-1}.}
\label{eq: Tabla String 1-1}
\end{table}
\end{example}

\section{Relationship with the Dirac system}

A relation between the one-dimensional stationary Dirac system from
relativistic quantum theory and a quadratic Sturm-Liouville pencil
can be established as follows. Consider the following canonical form
of the Dirac system (see, e.g., \cite[section 7]{Sturm-Liouville Dirac})
\begin{align*}
y_{2}^{\prime}+\left(v(x)+\lambda\right)y_{1} & =Ey_{1},\\
-y_{1}^{\prime}+(v(x)-\lambda)y_{2} & =Ey_{2}.
\end{align*}
Here $v$ is a potential, $\lambda$ corresponding to the mass of
a particle is a spectral parameter and the constant $E$ is a fixed
energy. Adding and subtracting the equations one obtains the system
\begin{align*}
u^{\prime}+\left(v-E\right)w & =\lambda u,\\
w^{\prime}-\left(v-E\right)u & =-\lambda w
\end{align*}
for the functions $u=y_{2}-y_{1}$ and $w=y_{2}+y_{1}$. Supposing
$v-E\neq0$ on the domain of interest, it is easy to see from the
second equality that $u=\frac{\lambda w+w^{\prime}}{v-E}$. Substituting
this expression into the first equality one obtains an equation of
the form (\ref{eq: S-L Bundle}):
\[
\left(\frac{w^{\prime}}{v-E}\right)^{\prime}+\left(v-E\right)w=\lambda^{2}\frac{w}{v-E}+\lambda\left(\frac{1}{v-E}\right)^{\prime}w.
\]
Analogously to what was presented in the preceding section for the
Zakharov-Shabat system the SPPS representation for the solutions of
the one-dimensional Dirac system as well as for the characteristic
equations of corresponding spectral problems can be obtained.

\section{A spectral problem for the equation of a smooth string with a distributed
friction}

The equation for the transverse displacement $u(x,t)$ of a string
in an inhomogeneous absorbing medium (see, e.g., \cite{Atkinson,Jaulent,Kobyakova})
which extends in the $x$-direction from $x=0$ to $x=l$ with
the characteristic parameters $c(x)>0$, $b(x)>0$ and the characteristic
of absorption $\Gamma(x)$, has the form
\begin{equation}
\frac{\partial}{\partial x}\left(c(x)\frac{\partial u}{\partial x}\right)-b(x)\frac{\partial^{2}u}{\partial t^{2}}-\Gamma(x)\frac{\partial u}{\partial t}=0.\label{eq: inom string}
\end{equation}
The following condition expresses the hypothesis that the medium is
totally reflecting at $z=0$:
\begin{equation}
u(0,t)=0.\label{eq: inom string cond}
\end{equation}
For a wave of frequency $\lambda$, i.e., for $u(x,t)=y(\lambda,x)e^{-i\lambda t}$,
(\ref{eq: inom string}) and (\ref{eq: inom string cond}) take the
form
\begin{equation}
\begin{cases}
\dfrac{\partial}{\partial x}\left(c(x)\dfrac{\partial y}{\partial x}\right)+\lambda^{2}b(x)y+i\lambda\Gamma(x)y=0,\\
y(\lambda,0)=0.
\end{cases}\label{DampString}
\end{equation}
The equation in (\ref{DampString}) is of the form (\ref{eq: S-L Bundle}).
For numerical examples we will consider $c\equiv1$, and the absorption
$i\Gamma(x)=:2a(x)$ , i.e.,
\begin{equation}
y^{\prime\prime}=2a(x)\lambda y+b(x)\lambda^{2}y\label{eq: String}
\end{equation}
with the boundary conditions
\begin{equation}
y(0)=y(1)=0.\label{eq: condiciones frontera cuerda}
\end{equation}

\subsection{Solution using the SPPS method}

Equation (\ref{eq: String}) is a quadratic Sturm-Liouville pencil
with $p=1$, $q=0$, $r_{1}=2a$ and $r_{2}=b$.

Take $x_{0}=0$ and $u_{0}=1$ for the SPPS method of Theorem \ref{thm:SL grado N}.
Due to the boundary conditions (\ref{eq: condiciones frontera cuerda})
we have that $c_{1}=0$. Then the characteristic equation of the problem
(\ref{eq: String}), (\ref{eq: condiciones frontera cuerda}) has
the form
\[
\sum_{n=0}^{\infty}\lambda^{n}X^{\left(2n+1\right)}(1)=0,
\]
and hence we are interested in locating zeros of the polynomial
\begin{equation}
\Phi_{M}\left(\lambda\right)=\sum_{n=0}^{M}\lambda^{n}X^{\left(2n+1\right)}(1)\label{Poly}
\end{equation}
which approximate the eigenvalues of the problem.

In the following numerical examples the recursive integrals $X^{(n)}$ and $\widetilde{X}^{(n)}$ were calculated
using the Newton-Cottes 6 point integration formula of 7-th order.
On each step for the integration (if not specified explicitly) we took 100000 equally spaced sampling points on the segment $[0,1]$.

\begin{example} \label{Example String 1}Consider equation (\ref{eq: String})
with $a=1$ and $b=1$. This is the case of a constant damping of
a vibrating string. The exact characteristic equation \cite{Cox-Zuazua}
has the form $\lambda^{2}+2\lambda=-n^{2}\pi^{2}$, that is,
\begin{equation*}
\lambda_{\pm n}=-1\pm\sqrt{1-n^{2}\pi^{2}},\qquad n=1,2,\ldots.
\end{equation*}
Approximation of the roots of the polynomial (\ref{Poly}) by means
of the routine \texttt{roots} of Matlab, with the machine precision
arithmetic and $M=100$ delivers the results presented in the second
column of Table \ref{eq: Tabla String 1}. The same procedure but
with the 256-digit precision arithmetic in Mathematica delivers the
results presented in the third column of Table \ref{eq: Tabla String 1}.
As it can be appreciated, the first several eigenvalues are computed
with a considerably better accuracy meanwhile the accuracy of higher
computed eigenvalues does not change significantly. Doubling the number of the used formal powers delivers twice as many eigenvalues preserving the precision of the first eigenvalues and improving the precision of the forthcoming ones.

\begin{table}[tbh]
\centering %
\begin{tabular}{ccccc}
\hline
Eigenvalue  & %
\begin{tabular}{c}
Abs. error,\tabularnewline
machine prec. \tabularnewline
\end{tabular} & %
\begin{tabular}{c}
Abs. error,\tabularnewline
256-digit prec. \tabularnewline
($M=100$)
\end{tabular} & %
\begin{tabular}{c}
Abs. error,\tabularnewline
256-digit prec. \tabularnewline
($M=200$)
\end{tabular} & %
\begin{tabular}{c}
Abs. error,\tabularnewline
machine precision\tabularnewline
with spectral shift\tabularnewline
($M=100$) \tabularnewline
\end{tabular}\tabularnewline
\hline
$\lambda_{\pm1}$  & $1.3\cdot10^{-12}$  & $1.4\cdot10^{-29}$ & $1.4\cdot10^{-29}$ & $6.2\cdot10^{-13}$  \tabularnewline
$\lambda_{\pm2}$  & $1.8\cdot10^{-13}$  & $1.8\cdot10^{-27}$ & $1.8\cdot10^{-27}$ & $3.0\cdot10^{-13}$  \tabularnewline
$\lambda_{\pm3}$  & $3.9\cdot10^{-13}$  & $3.0\cdot10^{-26}$ & $3.0\cdot10^{-26}$ & $4.0\cdot10^{-14}$  \tabularnewline
$\lambda_{\pm4}$  & $4.5\cdot10^{-12}$  & $2.3\cdot10^{-25}$ & $2.3\cdot10^{-25}$ & $4.1\cdot10^{-13}$  \tabularnewline
$\lambda_{\pm5}$  & $1.7\cdot10^{-10}$  & $1.1\cdot10^{-24}$ & $1.1\cdot10^{-24}$ & $1.6\cdot10^{-13}$  \tabularnewline
$\lambda_{\pm6}$  & $3.7\cdot10^{-9}$  & $3.8\cdot10^{-24}$ & $3.8\cdot10^{-24}$ & $2.1\cdot10^{-13}$  \tabularnewline
$\lambda_{\pm7}$  & $3.6\cdot10^{-8}$  & $1.1\cdot10^{-23}$ & $1.1\cdot10^{-23}$ & $2.5\cdot10^{-13}$  \tabularnewline
$\lambda_{\pm8}$  & $3.3\cdot10^{-7}$  & $2.5\cdot10^{-20}$ & $2.9\cdot10^{-23}$  & $2.6\cdot10^{-13}$  \tabularnewline
$\lambda_{\pm9}$  & $1.9\cdot10^{-5}$  & $4.5\cdot10^{-15}$ & $6.6\cdot10^{-23}$ & $1.8\cdot10^{-13}$  \tabularnewline
$\lambda_{\pm10}$  & $4\cdot10^{-4}$  & $2.3\cdot10^{-10}$ & $1.4\cdot10^{-22}$ & $2.4\cdot10^{-13}$  \tabularnewline
$\lambda_{\pm11}$  & $4\cdot10^{-3}$  & $4.2\cdot10^{-6}$ & $2.7\cdot10^{-22}$ & $3.0\cdot10^{-13}$  \tabularnewline
$\lambda_{\pm12}$  & $3.1\cdot10^{-2}$  & $3.6\cdot10^{-2}$ & $4.9\cdot10^{-22}$ & $3.0\cdot10^{-13}$  \tabularnewline
$\lambda_{\pm13}$  & $1.5$  & $1.9$ & $8.6\cdot10^{-22}$ & $1.8\cdot10^{-13}$  \tabularnewline
$\lambda_{\pm15}$  &  &  & $2.3\cdot10^{-21}$ & $1.2\cdot10^{-13}$  \tabularnewline
$\lambda_{\pm20}$  &  & & $2.4\cdot10^{-17}$ & $7.1\cdot10^{-14}$ \tabularnewline
$\lambda_{\pm25}$  &  & & $2.9$ & $1.6\cdot10^{-13}$  \tabularnewline
$\lambda_{\pm35}$  &  & & & $9.0\cdot10^{-13}$  \tabularnewline
$\lambda_{\pm50}$  &  & & & $1.9\cdot10^{-11}$  \tabularnewline
$\lambda_{\pm70}$  &  & & & $7.8\cdot10^{-10}$  \tabularnewline
$\lambda_{\pm100}$  &  & & & $4.0\cdot10^{-7}$  \tabularnewline
\hline
\end{tabular}\caption{Eigenvalue errors from Example \ref{Example String 1}.}
\label{eq: Tabla String 1}
\end{table}

This shows that the usage of arbitrary precision arithmetic allows
one to approximate more accurately the formal powers and the first eigenvalues, to obtain several additional eigenvalues. Meanwhile the spectral shift technique described in Section \ref{Spectral shift}
permits to enhance the accuracy of the obtained eigenvalues as well
as to calculate higher order eigenvalues even in machine precision.

\begin{figure}[tbh]
\centering
\includegraphics[bb=108 306 504 486,width=5.5in,height=2.5in]
{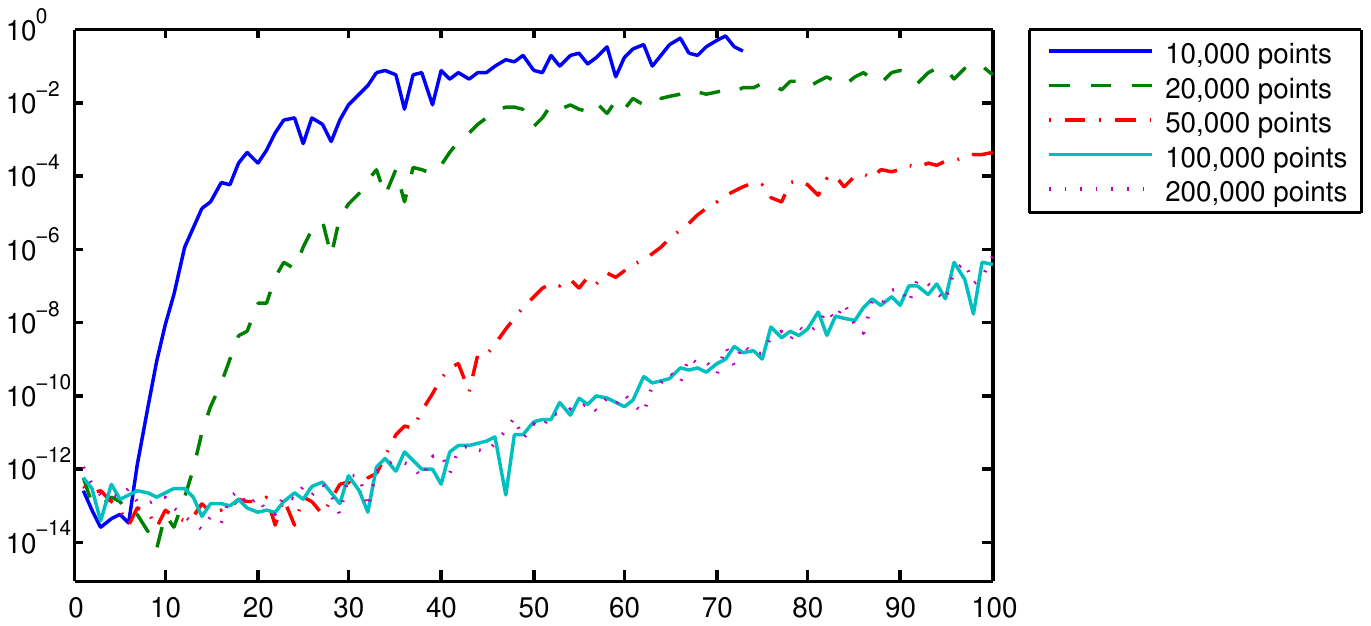}\\
\includegraphics[bb=108 306 504 486,width=5.5in,height=2.5in]
{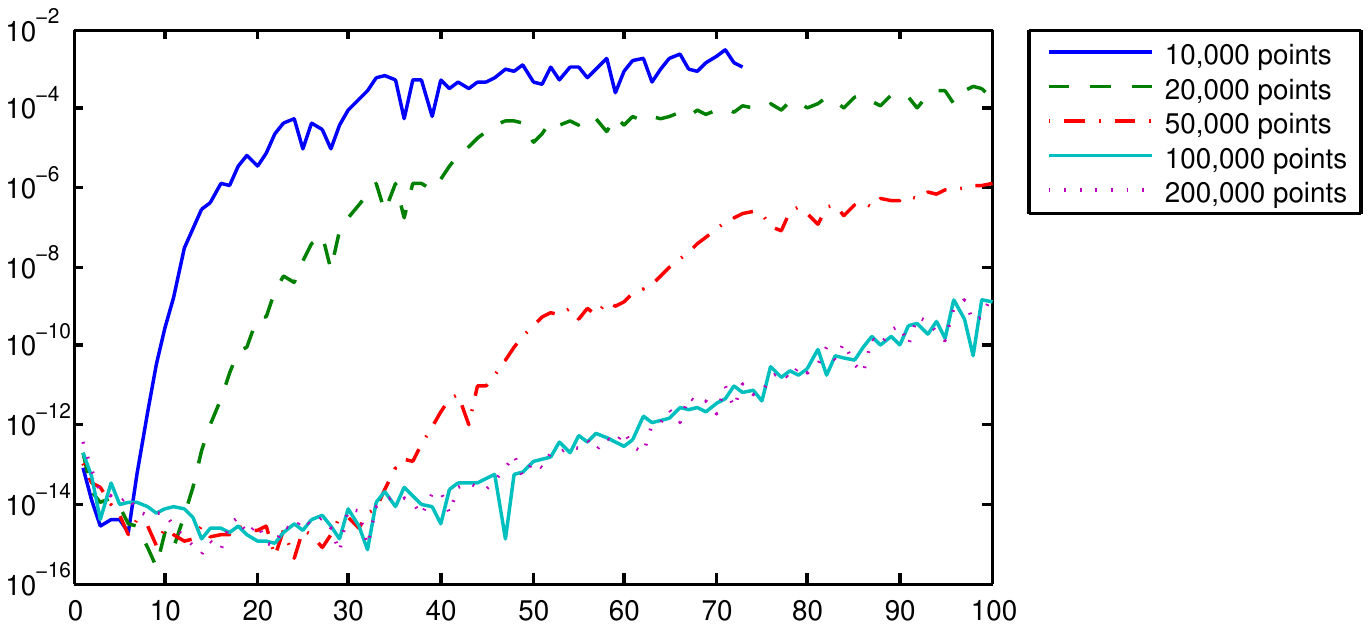}
\caption{The plots of absolute (above) and relative (below) errors of the eigenvalues from Example \ref{Example String 1} computed by means of the SPPS method using different numbers of points for calculating recursive integrals. The axis of abscissas corresponds to the ordinal number of the eigenvalue.}
\label{Fig1}
\end{figure}

We used values  $\widetilde\lambda_{n}=-1-(0.1+3i)n$ for the spectral shifts, on each step computing 201 formal powers, i.e., we took $M=100$ in \eqref{Poly}, and evaluating the new particular solution in terms of these formal powers and the SPPS representation. The roots of \eqref{Poly} closest to the current spectral shift were stored as approximate eigenvalues. The results obtained with the spectral shift procedure are presented in the fifth column of Table \ref{eq: Tabla String 1}. The eigenvalues were computed using the machine precision. A significant improvement in the accuracy and in the number of the found eigenvalues can be appreciated.

We verified the dependence of the eigenvalue errors on the number $N$ of points used for calculation of the recursive integrals. The plots of the absolute and relative errors of the eigenvalues for different values of $N$ obtained using the spectral shift technique described above are presented on Fig.\ \ref{Fig1}. The increment of the value of $N$ to 100000 leads to the improvement of the eigenvalues accuracy, meanwhile further increment of $N$ does not change it significantly. The slow growth of the error for $N=100000$ is due to the increasing distance between the values $\widetilde\lambda_{n}$ used for the spectral shift and the eigenvalues. The observed rapid growth of the error starting from some particular eigenvalue index for smaller values of $N$  can be explained recalling that the higher index eigenfunctions as well as the solutions for close values of the spectral parameter are highly oscillatory, i.e., have large derivatives, and taking into account the error formula of Newton-Cottes integration rule (see \cite[\S2.4 and (2.5.26)]{Rabinowitz}).
\end{example}

In the next example we present a problem with variable coefficients and  illustrate the dependence of the approximate eigenvalues precision on the truncation parameter $M$ in \eqref{Poly}.

\begin{example} \label{Example String 2}Consider equation (\ref{eq: String})
with $a=x^{2}$ and $b=1$. The exact characteristic equation for
this problem is
\begin{equation}
\begin{split} & 2^{\frac{\lambda^{\frac{3}{2}}}{2\sqrt{2}}}\Gamma\left(\frac{1}{8}\left(6+\sqrt{2}\lambda^{\frac{3}{2}}\right)\right)\operatorname{D}_{\frac{1}{4}\left(-2-\sqrt{2}\lambda^{\frac{3}{2}}\right)}\left(2^{\frac{3}{4}}\lambda^{\frac{1}{4}}\right)\\
 & \qquad-\Gamma\left(\frac{1}{8}\left(6-\sqrt{2}\lambda^{\frac{3}{2}}\right)\right)\operatorname{D}_{\frac{1}{4}\left(-2+\sqrt{2}\lambda^{\frac{3}{2}}\right)}\left(2^{\frac{3}{4}}i\lambda^{\frac{1}{4}}\right)=0,
\end{split}
\label{DispEq1}
\end{equation}
where $\operatorname{D}$ is the parabolic cylinder function. For
comparing the numerical results obtained by means of the SPPS method
with the exact eigenvalues, Wolfram's Mathematica \texttt{FindRoot}
command was used for calculating the roots of (\ref{DispEq1}).

On Fig.\ \ref{Fig2} we present the plots of the absolute
errors of the computed eigenvalues using different values of $M$. All computations were performed in Matlab using machine precision, $100000$ points for evaluating recursive integrals and applying spectral shift technique with the spectral shifts $\lambda_n=-1-4\pi n i$, $n\le 30$. As can be seen from the presented plots, the truncation parameter $M$ in \eqref{Poly} strongly affects the precision after the first spectral shift, meanwhile for the subsequent spectral shifts the precision is preserved. Starting from some particular value of $M$ the eigenvalue precision almost does not change, there is a small difference between $M=40$ and $M=50$ and no visible difference for $M=60$ (we did not include the errors for $M=60$ on the plots for this reason).

\begin{figure}[tbh]
\centering
\includegraphics[bb=108 301 504 489, width=5.5in,height=2.6in]
{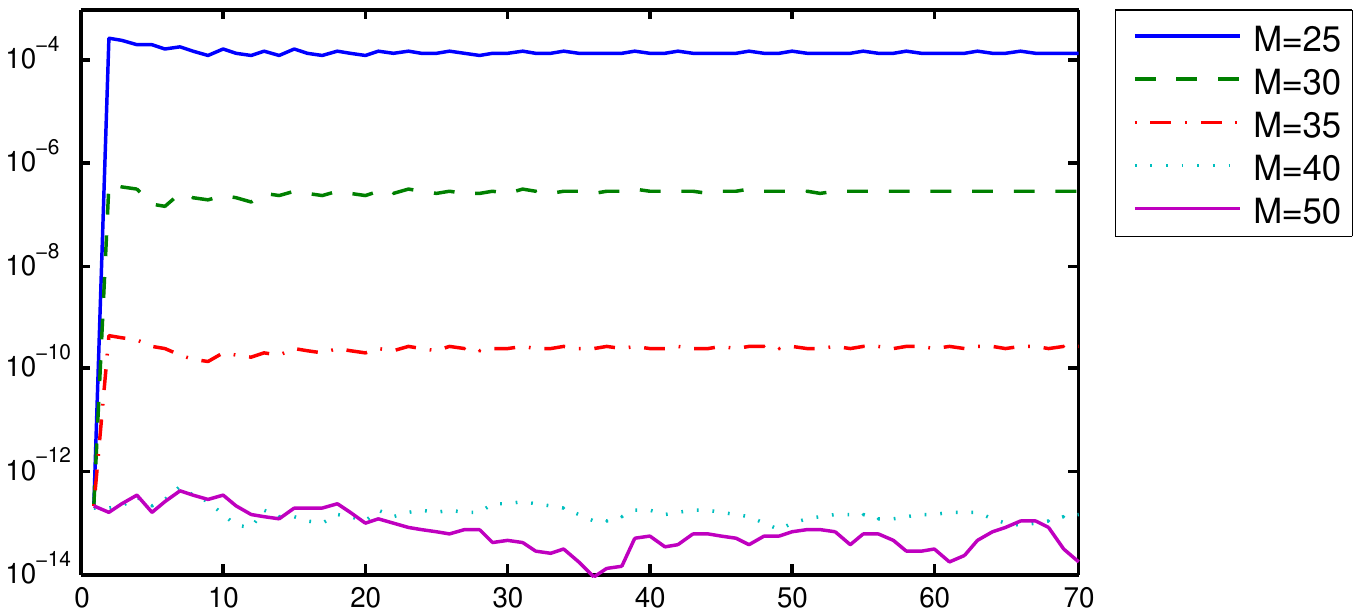}\caption{Absolute errors of the eigenvalues from Example \ref{Example String 2} computed by means of the SPPS method using different number of formal powers. The axis of abscissae corresponds to the ordinal number of the eigenvalue.}
\label{Fig2}
\end{figure}

\end{example}

\end{document}